\theoremstyle{plain}
\newtheorem{theorem}{Theorem}[section]
\newtheorem{corollary}[theorem]{Corollary}
\theoremstyle{definition}
\newtheorem{remark}[theorem]{Remark}
\newtheorem{example}[theorem]{Example}
\newtheorem{observation}[theorem]{Observation}
\newtheorem{problem}[theorem]{Problem}
\newcommand{\PSL}{\mathop{\mathrm{PSL}}}
\newcommand{\PSU}{\mathop{\mathrm{PSU}}}
\newcommand{\Sz}{\mathop{\mathrm{Sz}}}
\newcommand{\PSp}{\mathop{\mathrm{PSp}}}
\newcommand{\SL}{\mathop{\mathrm{SL}}}
\newcommand{\SU}{\mathop{\mathrm{SU}}}
\begin{document}
  \setcounter{Maxaffil}{3}
   \title{On finite groups whose power graph is a cograph}
   \author[a]{Peter J. Cameron\thanks{pjc20@st-andrews.ac.uk}}
   \author[b]{Pallabi Manna\thanks{mannapallabimath001@gmail.com}}
   \author[b]{Ranjit Mehatari\thanks{ranjitmehatari@gmail.com, mehatarir@nitrkl.ac.in}}
   \affil[a]{School of Mathematics and Statistics,}
   \affil[ ]{University of St Andrews,}
   \affil[ ]{North Haugh, St Andrews, Fife, KY16 9SS, UK\\}
   \affil[ ]{ }
   \affil[b]{Department of Mathematics,}
   \affil[ ]{National Institute of Technology Rourkela,}
   \affil[ ]{Rourkela - 769008, India}
   \maketitle
   \begin{abstract}
    A $P_4$-free graph is called a cograph. In this paper we partially characterize finite groups whose power graph is a cograph. As we will see, this problem
is a generalization of the determination of groups in which every element has
prime power order, first raised by Graham Higman in 1957 and fully solved very recently.

First we determine all groups $G$ and $H$ for which the power graph of $G\times H$ is a cograph.  We show that groups
 whose power graph is a cograph can be characterised by a condition only
involving elements whose orders are prime or the product of two (possibly
equal) primes. Some important graph classes are also taken under consideration. For finite simple groups we show that in most of the cases their power graphs are not cographs: the only ones for which the power graphs are cographs are
certain groups $\PSL(2,q)$ and $\Sz(q)$ and the group $\PSL(3,4)$. However, a
complete determination of these groups involves some hard number-theoretic problems.
   \end{abstract}
   \textbf{AMS Subject Classification (2020):} 05C25.\\
\textbf{Keywords:} Power graph, induced subgraph, cograph, nilpotent group, direct product, prime graph, simple groups.

   \section{Introduction}
   There are various graphs we can define for a group using different group properties \cite{Cameron1}.
   These graphs include the commuting graph, the generating graph, the power graph, the enhanced power graph, deep commuting graph, etc. The power graphs were first seen in early 2000's as the undirected power graphs of semigroups \cite{Kelarev}.
For a semigroup $S$, the \emph{directed power graph} of $S$, denoted by
$ \vec{P}(S)$,  is a directed graph with vertex set $V(\vec{P}(S))=S$; and two distinct vertices $x$ and $y$ are having an arc
$x\to y$ if $y$ is a power of $x$.

The corresponding undirected graph is called  the
\emph{undirected power graph} of $S$, denoted by $P(S)$. The undirected
power graph of a semigroup was introduced by
Chakrabarty et~al.~\cite{Chakrabarty} in 2009. So the undirected power graph of
$S$ is the graph with vertex set $V(P(S))=S$,
with an edge between two vertices $u$ and $v$ if $ u\ne v$ and either $v$ is
a power of $u$ or $u$ is a power of~$v$. These concepts are defined for groups
as a special case of semigroups. In the sequel, we only consider groups; ``power graph'' will
mean ``undirected power graph'', and all the groups in this paper are finite.
   
   The power graphs were well studied in literature \cite{aacns,Abawajy, Cameron, cg,cgj,cj,Chakrabarty, Chattopadhyay,Doostabadi, Ma,cmm}. We find several research papers in which researchers give complete or partial characterization of different graph parameters for the power graphs. We mention few notable works in this context:
   \begin{itemize}
       \item $P(G)$ is a complete graph if and only if either $G$ is trivial or a cyclic group of prime power order. (Chakrabarty \emph{et~al.} \cite{Chakrabarty})
\item
$P(G)$ is always connected and we can compute the number of edges in $P(G)$ by the formula
$|E(P(G))|=\dfrac{1}{2}\Big[\sum_{a\in G}(2o(a)-\phi(o(a))-1)\Big]$.
\item
The power graph
of a finite group is Eulerian if and only if  $G$ has odd number of elements.
\item
Curtin \emph{et~al.} \cite{Curtin} introduced the concept of proper power graphs. They determine
the diameter of the proper power graph of  $S_n$.

\item
Chattopadhyay \emph{et~al.} \cite{Chattopadhyay} have provided suitable bounds for the
vertex connectivity $P(G)$ where $G$ is a cyclic group.
\item
Cameron \cite{Cameron} proved that, for any two finite groups $G_1$ and $G_2$, if power graphs of $G_1$ and $G_2$ are isomorphic then  $\vec{P}(G_1)$ and $\vec{P}(G_2)$ are also isomorphic.

   \end{itemize}
   In our previous paper \cite{cmm}, we partially characterized finite groups whose power graphs forbid certain induced subgraphs. These subgraphs include
$P_4$ (the path on $4$ vertices); $C_4$ (the cycle on 4 vertices); $2K_2$ (the complement of $C_4$); etc. A graph forbidding $P_4$ is called a cograph.
In other words,  a graph $\Gamma$ is a \emph{cograph} if it does not contain
the $4$-vertex path as an induced subgraph.
Cographs have various important properties. For
example, they form the smallest class of graphs containing the $1$-vertex
graph and closed under complementation and disjoint union. (The complement of
a connected cograph is disconnected.)
 See~\cite{Br,Cameron1} for more about these concepts.

We will use the term \emph{power-cograph group}, sometimes abbreviated to
\emph{PCG-group}, for a finite group whose power graph is a cograph.
 
In \cite{cmm}, we completely characterized finite nilpotent power-cograph
groups. We proved the following theorem:

   \begin{theorem}[\cite{cmm}, Theorem 3.2]\label{t:nilp_cograph}
Let $G$ be a finite nilpotent group. Then $P(G)$ is a cograph if and only if
either $|G|$ is a prime power, or $G$ is cyclic of order $pq$ for distinct
primes $p$ and $q$.
\end{theorem}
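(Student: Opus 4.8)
The plan is to use the fact that a finite nilpotent group is the direct product of its Sylow subgroups, and to treat the prime-power case and the non-prime-power case separately. Throughout, one uses that two distinct vertices $x,y$ of $P(G)$ are adjacent precisely when $\langle x\rangle\subseteq\langle y\rangle$ or $\langle y\rangle\subseteq\langle x\rangle$; in particular adjacency depends only on the cyclic subgroups generated, and if $\langle x\rangle=\langle y\rangle$ then $x$ and $y$ are \emph{twins} (adjacent, with the same neighbours outside $\{x,y\}$), so they cannot be the two central vertices of an induced $P_4$.

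First I would show that if $|G|=p^k$ then $P(G)$ is a cograph, the key structural input being that in a cyclic $p$-group the subgroups form a chain (there is a unique subgroup of each order). Suppose $a\!-\!b\!-\!c\!-\!d$ were an induced $P_4$. Looking at the induced path $a\!-\!b\!-\!c$: since $\langle a\rangle$ and $\langle c\rangle$ are incomparable, it is impossible that $\langle a\rangle\subseteq\langle b\rangle$ and $\langle c\rangle\subseteq\langle b\rangle$ (two subgroups of the cyclic $p$-group $\langle b\rangle$ are comparable), and impossible that $\langle a\rangle\subseteq\langle b\rangle\subseteq\langle c\rangle$ or the reverse (this would force $a\sim c$); hence $\langle b\rangle\subsetneq\langle a\rangle$ and $\langle b\rangle\subsetneq\langle c\rangle$. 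Applying the same reasoning to $b\!-\!c\!-\!d$ gives $\langle c\rangle\subsetneq\langle b\rangle$, contradicting $\langle b\rangle\subsetneq\langle c\rangle$. So no induced $P_4$ exists.

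For the converse write $G=P_1\times\cdots\times P_k$ with the $P_i$ the nontrivial Sylow subgroups for distinct primes $p_1,\dots,p_k$. If $G\cong\mathbb Z_{pq}$ I would describe $P(G)$ explicitly: the identity and the $\phi(pq)$ generators are dominating vertices, and removing them leaves the disjoint union of a clique on the elements of order $p$ and a clique on the elements of order $q$; since cographs are closed under disjoint union and under adding dominating vertices (join with $K_1$), $P(G)$ is a cograph. In every remaining case — $k\geq 3$, or $k=2$ with some $|P_i|\geq p_i^2$ — I would exhibit an explicit induced $P_4$ built from elements of prime or prime-squared order and their products. For $k\geq 3$, pick $x_i\in P_i$ of order $p_i$ and take $x_1\!-\!x_1x_2\!-\!x_2\!-\!x_2x_3$. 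If some Sylow subgroup has two distinct subgroups of order $p$, pick generators $a,b$ of them and an element $c$ of prime order $q$ in another Sylow subgroup, and take $a\!-\!ac\!-\!c\!-\!bc$. If some Sylow subgroup has an element $a$ of order $p^2$, take $a\!-\!a^p\!-\!a^pc\!-\!c$ with $c$ of prime order $q$ elsewhere. In each case one checks the three consecutive pairs are edges — a suitable power of a product of commuting coprime-order factors recovers each factor — and the other three pairs are non-edges, because the relevant cyclic subgroups have incomparable orders, or equal orders but are distinct.

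I expect the main obstacle to be the prime-power direction: proving that $P(G)$ is a cograph for \emph{every} finite $p$-group rests on the genuinely structural observation about chains of subgroups in cyclic $p$-groups (exactly where $p$-groups differ from, e.g., $\mathbb Z_6$), whereas the non-prime-power direction is a finite, if slightly tedious, case analysis. A secondary point needing care is organising the case split in the converse so that it is manifestly exhaustive for nilpotent groups that are neither of prime-power order nor cyclic of order $pq$, and verifying in each case that the four named vertices are pairwise distinct.
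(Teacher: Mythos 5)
The paper you are working from does not actually prove this statement: it is imported verbatim from the authors' earlier paper \cite{cmm} (Theorem 3.2 there), so there is no in-paper proof to compare against. Judged on its own, your argument is correct and complete in all essentials, and it follows the natural route one would expect: adjacency in $P(G)$ is comparability of the generated cyclic subgroups; for $|G|=p^k$ the chain property of subgroups of a cyclic $p$-group forces the middle vertex of any induced path of length $2$ to generate a subgroup properly contained in both neighbours' cyclic subgroups, which is contradictory on a $P_4$; for $C_{pq}$ the graph is a join of a clique (identity plus generators) with two disjoint cliques; and in every other nilpotent case you exhibit an explicit induced $P_4$ inside the Sylow decomposition ($x_1$--$x_1x_2$--$x_2$--$x_2x_3$ for three primes, $a$--$ac$--$c$--$bc$ and $a$--$a^p$--$a^pc$--$c$ for two primes), all of which check out. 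The only step you should make explicit is the exhaustiveness you yourself flag: if $k=2$ and some Sylow $p$-subgroup has order at least $p^2$, then it contains a subgroup of order $p^2$, which is either cyclic (giving an element of order $p^2$) or $C_p\times C_p$ (giving two distinct subgroups of order $p$); equivalently, a $p$-group of exponent $p$ with a unique subgroup of order $p$ has order $p$. This is a one-line remark, not a gap, so your proof stands.
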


For a given group $G$, the power graph of any subgroup of $G$ is an induced subgraph of $P(G)$. Thus if power graph of a group is a cograph then the power graph of any of its subgroups is also a cograph. In other words, the class of finite
power-cograph groups is subgroup-closed.

For that reason, we have a necessary condition for a group to be a power-cograph
group: any nilpotent subgroup of such a group is either a $p$-group or isomorphic to $C_{pq}$, where $p$ and $q$ are distinct primes.  So if $G$ has a nilpotent subgroup which is neither a $p$-group nor isomorphic to $C_{pq}$, then $P(G)$ is not a cograph. In our previous paper, we have asked the following question:
Classify the finite groups $G$ for which $P(G)$ is a cograph.
In this paper we provide further results towards the answer to this question. 

We now give several equivalent conditions on a finite group which are known to
imply that the power graph is a cograph. First we require a few definitions.
\begin{itemize}
\item
For a finite group $G$, Let $\pi(G)$ denote the set of all prime divisors of $|G|$. The \emph{prime graph} or \emph{Gruenberg--Kegel graph} of $G$ is a graph with $V=\pi(G)$ and two distinct elements $p$ and $q$ of $\pi(G)$ are connected if and only if $G$ contains an element of order $pq$.
\item
The \emph{enhanced power graph} of $G$ is the graph with vertex set $G$, in
which vertices $x$ and $y$ are joined if there exists $z\in G$ such that both
$x$ and $y$ are powers of $z$. Clearly the power graph is a spanning subgraph
of the enhanced power graph.
\item
The group $G$ is an \emph{EPPO group} if every element of $G$ has prime power
order.
\end{itemize}

\begin{theorem}
\label{prime:cograph} 
For a finite group $G$, the following conditions are equivalent:
\begin{enumerate}
\item $G$ is an EPPO group;
\item the Gruenberg--Kegel graph of $G$ has no edges;
\item the power graph of $G$ is equal to the enhanced power graph.
\end{enumerate}
If these conditions hold, then the power graph of $G$ is a cograph.
\end{theorem}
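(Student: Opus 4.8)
The plan is to prove the three equivalences by a short cycle of implications, and then deduce the cograph property separately from (a).

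First I would show (a) $\Leftrightarrow$ (b). The Gruenberg--Kegel graph has an edge between primes $p$ and $q$ precisely when $G$ has an element of order $pq$; such an element is not of prime power order, so if $G$ is EPPO the prime graph has no edges. Conversely, if $G$ is not EPPO, take an element $g$ whose order $n$ is divisible by two distinct primes $p,q$; then a suitable power of $g$ has order $pq$ (namely $g^{n/pq}$ raised to an appropriate exponent, or more cleanly, the subgroup $\langle g\rangle$ is cyclic and hence contains an element of order $pq$), which gives an edge in the prime graph. Next I would show (a) $\Leftrightarrow$ (c). Recall the power graph is always a spanning subgraph of the enhanced power graph, so the two graphs are equal iff every edge of the enhanced power graph is an edge of the power graph. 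If $G$ is EPPO, suppose $x,y$ are joined in the enhanced power graph, so $x,y\in\langle z\rangle$ for some $z$; then $\langle x\rangle$ and $\langle y\rangle$ are subgroups of the cyclic group $\langle z\rangle$, and since $o(z)$ is a prime power, the subgroups of $\langle z\rangle$ form a chain under inclusion, so one of $\langle x\rangle,\langle y\rangle$ contains the other, i.e. $x,y$ are adjacent in the power graph. Conversely, if $G$ is not EPPO, pick $g$ with $o(g)=p^aq^b\cdots$ divisible by at least two primes; choose $x$ of order $p^a$ and $y$ of order (the $p'$-part) inside $\langle g\rangle$ — then $x$ and $y$ are powers of $g$, hence joined in the enhanced power graph, but neither is a power of the other, so they are non-adjacent in the power graph; thus the two graphs differ.

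Finally I would establish the last assertion: if $G$ is EPPO then $P(G)$ is a cograph. The key observation is that under the EPPO hypothesis, $P(G)$ equals the enhanced power graph by the equivalence just proved, and the enhanced power graph of an EPPO group has a very rigid structure: its connected components are exactly the sets of elements of $p$-power order for each prime $p\in\pi(G)$ together with the identity, and within each such component the graph is (a slight modification of) a complete multipartite-like cluster — more precisely, for a fixed prime $p$, any two elements of $p$-power order lie in a common cyclic $p$-subgroup, whose subgroup lattice is a chain, so in fact the set of nontrivial $p$-elements induces a graph in which adjacency is governed by the chain of cyclic subgroups. One can check directly that such a graph contains no induced $P_4$: any induced path on four vertices would have to stay inside one component (the identity is adjacent to everything, so it cannot be an internal or endpoint vertex of an induced $P_4$ of length three without creating a chord), and inside a component the "chain" structure forces transitivity of a suitable comparability relation that kills $P_4$. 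Alternatively, and more cleanly for write-up, I would invoke that a disjoint union of cographs is a cograph, reducing to a single prime $p$, and then argue that the $p$-part graph is a cograph because it is obtained from a rooted-tree comparability structure (the inverted subgroup lattice) which is well known to be $P_4$-free.

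The main obstacle I expect is the very last step — verifying cleanly that the power graph of a $p$-group (equivalently, the enhanced power graph of an EPPO group restricted to one component) is a cograph. The cleanest route is probably not to analyze $P_4$'s by hand but to note that $P(G)$ for a $p$-group $G$ has a universal vertex (the identity), and that deleting a universal vertex from a cograph-candidate and checking closure is exactly the recursive cograph definition; combined with the fact that the components (for distinct primes) are glued only at the identity, which is universal, this should let the cograph property propagate. I would double-check that no induced $P_4$ can use the identity: since the identity is adjacent to all other vertices, if it appeared in an induced $P_4=a-b-c-d$ it would have to be one of the four vertices, but then it would be adjacent to the two non-neighbours required by the path structure, a contradiction — so any induced $P_4$ avoids the identity and hence lies within a single $p$-component, reducing everything to the $p$-group case.
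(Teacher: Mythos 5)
Your treatment of the equivalences (a)$\Leftrightarrow$(b)$\Leftrightarrow$(c) is correct and complete; the paper simply cites Aalipour et al.\ for these, so there you are more self-contained than the text. For the final implication your route is broadly the paper's as well (delete the universal identity, note every edge joins elements whose orders are powers of a common prime, then handle one prime at a time; the paper finishes by quoting Theorem~\ref{t:nilp_cograph}), but two of your intermediate assertions are false as written. First, ``any two elements of $p$-power order lie in a common cyclic $p$-subgroup'' is wrong: two distinct commuting involutions already refute it. What is true, and all you need, is that two \emph{adjacent} vertices lie in a common cyclic subgroup, which under EPPO has prime power order. Second, the set of $p$-elements of an EPPO group is not a subgroup in general (consider the involutions of $A_5$), so the induced subgraph on the $p$-elements is not ``the power graph of a $p$-group'', and the proposed reduction ``to the $p$-group case'' does not go through as stated. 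Relatedly, the remark that ``transitivity of a suitable comparability relation kills $P_4$'' is not a valid principle: $P_4$ is itself a comparability graph (of the fence poset $a<b>c<d$), so transitivity alone rules out nothing.

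The clean finish, which your ``alternatively'' paragraph gestures at, is the chain argument. The identity, being universal, lies in no induced $P_4$ (your argument for this is fine), and any induced $P_4$ consists of $p$-elements for a single prime $p$. Orient such a path $(a,b,c,d)$ in the directed power graph: since $\to$ is transitive and the path is induced, the orientation must be $a\to b\gets c\to d$ or its reverse (this is exactly the observation used in the proof of Theorem~\ref{product:cographs}). In the first case $b,d\in\langle c\rangle$, in the second $a,c\in\langle b\rangle$; either way two non-adjacent vertices lie in a cyclic group of prime power order, whose subgroups form a chain, so they are adjacent --- a contradiction. (Equivalently, what makes the one-prime piece $P_4$-free is not transitivity but the fact that principal down-sets of the relation ``is a power of'' are chains, i.e.\ the quasi-threshold/forest structure; or one can, as the paper does, invoke Theorem~\ref{t:nilp_cograph} for each cyclic subgroup of prime power order.) With those two statements repaired, your proof is correct and slightly more self-contained than the paper's, which outsources both the equivalences and the prime-power case to earlier results.
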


For the equivalence of (a)--(c), see Aalipour \emph{et al.}~\cite{aacns}. If these hold, then any edge of the power graph not containing the identity
joins elements whose orders are powers of the same prime; so the reduced
power graph of $G$ (obtained by removing the identity) is a disjoint union
of reduced power graphs of groups of prime power order, which are cographs by
Theorem \ref{t:nilp_cograph}.
The class of EPPO groups was first investigated (though not under that name)
by Graham Higman in 1957~\cite{Higman}, but the complete determination of these
groups only appeared in a paper not yet published~\cite{cm}.

We note that the condition that $G$ is a power-cograph group does not imply (a)--(c).
Moreover two groups may have the same prime graph, yet one and not the other is
a power-cograph group. For example, consider $G_1=C_{12}$ and $G_2=D_6$.

In this paper we explore various graph classes and try to identify whether their power graphs are cographs or not. First we discuss direct product two groups. We are able to identify certain solvable groups  whose power graph is a cograph. Finally we consider finite simple groups. Our result is as follows:

\begin{theorem}
Let $G$ be a non-abelian finite simple group. Then $G$ is a power-cograph group
if and only if one of the following holds:
\begin{enumerate}
\item $G=\PSL(2,q)$, where $q$ is an odd prime power with $q\ge5$, and each of
$(q-1)/2$ and $(q+1)/2$ is either a prime power or the product of two
distinct primes;
\item $G=\PSL(2,q)$, where $q$ is a power of $2$ with $q\ge4$, and each of
$q-1$ and $q+1$ is either a prime power or the product of two distinct primes;
\item $G=\Sz(q)$, where $q=2^{2e+1}$ for $e\ge2$, and each of $q-1$,
$q+\sqrt{2q}+1$ and $q-\sqrt{2q}+1$ is either a prime power or the product of
two distinct primes;
\item $G=\PSL(3,4)$.
\end{enumerate}
\end{theorem}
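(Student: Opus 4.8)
The proof combines the classification of finite simple groups (CFSG) with the two basic ways an induced $P_4$ arises in a power graph, and with the reduction --- announced in the Introduction --- of the cograph property to elements whose order is a prime, a prime square, or a product of two distinct primes. The first mechanism comes from Theorem~\ref{t:nilp_cograph} together with the fact that power-cograph groups are subgroup-closed: since $C_n$ has cograph power graph only when $n$ is a prime power or a product of two distinct primes, \emph{no power-cograph group has an element whose order is divisible by $p^{2}q$ or by $pqr$ for distinct primes}, so all cyclic torus and unipotent subgroups are constrained. The second, which we call the \emph{$P_4$-obstruction}, is: if $G$ has an element $s$ of prime order $t$ such that, for some prime $\ell\ne t$, the centraliser $C_G(s)$ contains two distinct subgroups $\langle a\rangle$ and $\langle b\rangle$ of order $\ell$, then $\langle sa\rangle$ and $\langle sb\rangle$ are distinct cyclic subgroups of order $t\ell$ meeting only in $\langle s\rangle$, and $a,\ sa,\ s,\ sb$ induce a $P_4$ in $P(G)$. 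These two (together with the analogous configurations involving elements of prime-square order) account for every induced $P_4$ among elements of small order, which is the content of the reduction; so once neither obstruction is present and no cyclic subgroup has forbidden order, the power graph is a cograph.

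\emph{Alternating and sporadic groups.} A direct check shows that $(1\,2)(3\,4)$, $(1\,2)(3\,4)(5\,6\,7)$, $(5\,7\,6)$ and $(1\,2)(5\,6\,7)$ induce a $P_4$ in $P(A_7)$; hence $A_7$, and therefore every $A_n$ with $n\ge7$, is not a power-cograph group, while $A_5\cong\PSL(2,4)\cong\PSL(2,5)$ and $A_6\cong\PSL(2,9)$ are treated below. For the $26$ sporadic groups the Atlas gives, in almost every case, an element of order $p^{2}q$ or $pqr$; for the exceptions whose element orders are all prime powers or products of two distinct primes (for instance $M_{11}$, whose involution centraliser has shape $2.S_4$ and contains several subgroups of order $3$) the $P_4$-obstruction applies with a suitable involution $s$ and an odd prime $\ell$. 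So no sporadic group is a power-cograph group.

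\emph{Groups of Lie type.} This is the main part. Let $G$ be a simple group of Lie type over $\mathbb{F}_q$ with $q=p^{f}$. For all but finitely many groups of twisted rank at least $2$ one finds inside $G$ either an element of order $p^{2}q$ or $pqr$, or a $P_4$-obstruction, by combining the orders of the maximal tori with the structure of centralisers of semisimple and unipotent elements; for example $\PSL(n,q)$ with $n\ge4$ contains a nilpotent, non-cyclic subgroup $C_p\times C_p\times C_\ell$ (two commuting root elements and a diagonal semisimple element of order $2$, or of an odd prime dividing $q-1$, centralising both), whose power graph is not a cograph, while the unitary, symplectic, orthogonal and exceptional families are dealt with case by case, the recurring point being that the centraliser of a suitably chosen semisimple element contains a subgroup such as $\SL(2,q)$ or $\PSL(2,q)$, which for $q$ not too small supplies a $P_4$-obstruction. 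The rank-$1$ groups $\PSU(3,q)$ and ${}^2G_2(q)$ always contain an obstruction as well --- for ${}^2G_2(q)$ the centraliser of an involution is $C_2\times\PSL(2,q)$ --- apart from small coincidences that are really $\PSL(2,q)$. What remains is $\PSL(2,q)$, $\Sz(q)$, and a short list of small groups. In $\PSL(2,q)$ every element is unipotent (order $p$) or lies in a conjugate of the split torus $C_{(q-1)/d}$ or the non-split torus $C_{(q+1)/d}$ with $d=\gcd(2,q-1)$, and for $q$ odd the centraliser of an involution is dihedral of order $q\mp1$; since unipotent subgroups are $p$-groups, dihedral groups give no $P_4$-obstruction, and $(q-1)/d$ and $(q+1)/d$ are coprime, the reduction gives that $P(\PSL(2,q))$ is a cograph exactly when each of $(q-1)/d$ and $(q+1)/d$ is a prime power or a product of two distinct primes --- cases (a) and (b). For $\Sz(q)$ the same analysis, using the pairwise coprime torus orders $q-1$, $q+\sqrt{2q}+1$, $q-\sqrt{2q}+1$ and the $2$-local structure (again no $P_4$-obstruction arises), gives case (c); the smallest Suzuki group $\Sz(8)$ is an EPPO group and so is already a power-cograph group by Theorem~\ref{prime:cograph}. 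Finally, of the remaining small groups only $\PSL(3,4)$ survives: all of its element orders are prime powers, so it is an EPPO group and a power-cograph group by Theorem~\ref{prime:cograph}, which is case (d); each of the others (such as $\PSL(3,3)$, $\PSL(3,q)$ for $q\notin\{2,4\}$, and the small symplectic, unitary and exceptional groups) contains one of the two obstructions.

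\emph{Main obstacle.} The heart of the proof is the uniform treatment of the higher-rank groups of Lie type: one needs enough control over the maximal tori --- their orders and the action of the Weyl group --- and over centralisers of semisimple and unipotent elements to force, across all families and ranks, an element of order $p^{2}q$ or $pqr$ or a suitable pair of commuting prime-order elements, and then to dispose by hand of the finitely many small-rank, small-field groups that escape the general estimates. A difficulty of a different nature is that conditions (a)--(c) admit no closed form: they ask for which prime powers $q$ certain linear and quadratic functions of $q$ take values that are prime powers or products of two distinct primes, which is a genuine open problem in number theory, subsuming questions about Mersenne and Fermat primes and their analogues.
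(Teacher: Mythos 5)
Your overall architecture (CFSG, subgroup-closure plus Theorem~\ref{t:nilp_cograph} to forbid element orders divisible by $p^2q$ or $pqr$, and a commuting-elements construction of an induced $P_4$) is sound, and your ``$P_4$-obstruction'' is indeed a valid special case of what the paper proves in general (its Theorem~\ref{minimal:cograph}). But as stated your reduction is incomplete: the two obstructions ``together with the analogous prime-square configurations'' do \emph{not} account for every induced $P_4$ among small-order elements. For instance, an element $s$ of prime order $p$ whose centraliser contains elements $a,b$ of two \emph{distinct} primes $q,r\neq p$ already yields the induced path $(a,sa,s,sb)$ even when there is no element of order $pqr$ and no repeated prime; this is exactly the case $r\notin\{p,q\}$ of the paper's criterion, and it is not covered by your list. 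More importantly, your sufficiency claims (``once neither obstruction is present\dots the power graph is a cograph'') need the full converse of such a criterion, or else the trivial-intersection argument the paper makes precise in Theorem~\ref{t:maxcyclic}: for $\PSL(2,q)$ and $\Sz(q)$ you implicitly use that distinct maximal cyclic subgroups meet trivially and that the torus orders are pairwise coprime, neither of which you establish.

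The larger gap is that the heart of the classification --- rank at least $2$ and the rank-one unitary groups --- is asserted rather than proved. The paper has to work here: explicit matrix constructions in $\SL(3,q)$ (separately for odd $q$ and for even $q>4$, where Mih\u{a}ilescu's theorem is needed to find a prime $>3$ dividing $q-1$), an explicit construction in $\SU(3,q)$ for even $q$ using a prime $p>3$ dividing $q+1$ (which fails precisely for $q=8$, handled by the $4$-$6$ test), a torus-order argument plus hand checks for $\PSU(3,3)$ and $\PSU(3,5)\supseteq A_7$, specific containments ($G_2(q)\supseteq \SL(3,q)$ or $\SU(3,q)$, ${}^3D_4(q)\supseteq G_2(q)$, ${}^2F_4(2)\subset{}^2F_4(2^d)$, Levi subgroups of type $A_2$ in higher rank), Mih\u{a}ilescu again for ${}^2G_2(q)$, and separate small-field cases ($\PSp(4,3)$, $\PSL(4,2)\cong A_8$, $\PSp(6,2)$, $\PSp(4,4)$, etc.). Your alternative device for $\PSL(n,q)$, $n\ge4$ --- a subgroup $C_p\times C_p\times C_\ell$ --- is a nice shortcut where it works, but it needs care after factoring out scalars and fails exactly for $q\in\{2,4\}$, which are the cases requiring individual treatment; the unitary, symplectic, orthogonal and exceptional families are simply labelled ``case by case''. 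Two smaller points: your explicit $P_4$ in $A_7$ uses $(1\,2)(5\,6\,7)$, an odd permutation not in $A_7$ (replace it by, say, $(1\,3)(2\,4)(5\,6\,7)$); and the sporadic groups are only checked for $M_{11}$, whereas a complete argument must either verify all $26$ or, as the paper does, use subgroup containments from a short list of non-PCG groups.
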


We end the introduction with some remarks about this theorem. In the first
three cases, determining precisely which groups occur is a purely 
number-theoretic problem which is likely to be quite difficult. For example,
the values of $d$ (at least~$2$) for which the power graph of $\PSL(2,2^d)$
is a cograph for $d\le200$ are $1$, $2$, $3$, $4$, $5$, $7$, $11$, $13$, $17$,
$19$, $23$, $31$, $61$, $101$, $127$, $167$, and $199$, and the values of $e$
(at least~$1$) for which the power graph of  $\Sz(2^{2e+1})$ is a cograph for
$e\le100$ are $1$, $2$, $3$, $4$, $5$, $6$, $8$, $44$.

\begin{problem} 
Are there infinitely many non-abelian finite simple groups
$G$ which are power-cograph groups?
\end{problem}

\medskip

Secondly, there is a big gap between finding the simple groups satisfying the
condition and finding all groups. This can be seen in the somewhat similar
property of being an EPPO group, where the list of simple EPPO groups follows
from the work of Suzuki~\cite{suzuki1,suzuki2} but the complete determination
of these groups is much more recent.

   \section{Direct products}

   Recall that a finite nilpotent group can be written as a direct product of its Sylow  subgroups. Thus if $P(G\times H)$ is a cograph  then, by using Theorem \ref{t:nilp_cograph}, we have only a few choices for the orders of $G$ and $H$. The following theorem gives a complete characterization for all direct products $G\times H$ such that $P(G\times H)$ is a cograph.

   \begin{theorem}
   \label{product:cographs}
Let $G$ and $H$ be non-trivial groups. Then $G \times H$ is a power-cograph
group if and only if one of the following holds:
\begin{enumerate}
\item the orders of $G$ and $H$ are powers of the same prime;
\item $G$ and $H$ are cyclic groups of distinct prime orders;
\item there are primes $p$ and $q$ and an integer $m \geq 1$ such that
$ q^{m}\mid (p-1)$; one of $G$ and $H$ is a cyclic group of order $q$, and the other is the non-abelian group
\[\langle  a,b: a^{p}=1, b^{q^m}=1, b^{-1}ab=a^k\rangle,\]
where $k$ is an integer with multiplicative order $q^{m}$ $(\text{mod }p)$.
\end{enumerate}
\end{theorem}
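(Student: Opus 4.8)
The plan is to prove the two implications separately.

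\emph{Sufficiency.} Cases (a) and (b) are immediate from Theorem~\ref{t:nilp_cograph}: if the orders of $G$ and $H$ are powers of the same prime then $G\times H$ is a $p$-group, and in case (b) $G\times H$ is cyclic of order $pq$, so in both cases $G\times H$ is nilpotent of one of the two admissible shapes. Case (c) is the substantive one. Here $M=\langle a,b: a^p=1, b^{q^m}=1, b^{-1}ab=a^k\rangle$ is a Frobenius group with kernel $K=\langle a\rangle\cong C_p$ and cyclic complement $\langle b\rangle\cong C_{q^m}$, so its element orders are $1$, $p$, and the powers of $q$ up to $q^m$. I would first show that $C_q\times M$ is covered by one subgroup isomorphic to $C_{pq}$, namely $C_q\times K$, together with $p$ subgroups isomorphic to the $q$-group $C_q\times C_{q^m}$, one for each Frobenius complement of $M$, and that these ``pieces'' pairwise intersect exactly in the subgroup $C_q\times\{1\}\cong C_q$. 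The key lemma is that every cyclic subgroup of $C_q\times M$ lies inside one piece, and that the only cyclic subgroup of a $q$-group piece containing $C_q\times\{1\}$ is $C_q\times\{1\}$ itself (a short computation using that $C_q\times\{1\}$ is a direct factor there). Consequently each edge of $P(C_q\times M)$ lies inside a single piece; each piece has a cograph power graph by Theorem~\ref{t:nilp_cograph}; and, crucially, inside a $q$-group piece the vertices of $C_q\times\{1\}$ are adjacent only to one another and to the identity. A hypothetical induced $P_4$ would have to use edges from at least two pieces, and wherever it passes from one piece to another it must do so through the clique $C_q\times\{1\}$; a short case analysis on the pattern of pieces along the three edges then forces, in each case, a forbidden adjacency or non-adjacency (typically: a vertex that is made to lie in $C_q\times\{1\}$, together with a generator of $C_q\times K$, violating the non-edges of the $P_4$). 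Hence $P(C_q\times M)$ is a cograph.

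\emph{Necessity.} Suppose $P(G\times H)$ is a cograph. Since the class of power-cograph groups is subgroup-closed and $\langle g\rangle\times\langle h\rangle$ is nilpotent, Theorem~\ref{t:nilp_cograph} yields the order condition: for all $g\in G$ and $h\in H$, the integer $o(g)\,o(h)$ is either a prime power or a product of two distinct primes. If $\gcd(|G|,|H|)=1$, this forces $o(g)$ and $o(h)$ to be distinct primes for all non-identity $g,h$; I would then show that $G$ has a single prime divisor by exhibiting the induced path $(g_1,1)-(g_1,h)-(1,h)-(g_2,h)$ whenever $G$ contains elements $g_1,g_2$ of distinct prime orders (the relevant non-edges all come from $g_2\notin\langle g_1\rangle$), and similarly that the Sylow subgroup of $G$ is cyclic (otherwise $C_p\times C_p\le G$ produces the analogous $P_4$); thus $G\cong C_p$, and symmetrically $H\cong C_q$, which is case~(b). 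If instead some prime $p$ divides both $|G|$ and $|H|$: if $\pi(G)=\pi(H)=\{p\}$ we are in case~(a); otherwise, applying the order condition with an element of order $p$ in one factor shows that every element of the other factor has order $1$, a power of $p$, or a prime distinct from $p$, and a further sequence of sub-cases is needed, distinguished by where the additional prime lies, whether the ``$p$-only'' factor contains an element of order $p^2$, and the structure of the relevant Sylow subgroups. Each configuration is eliminated either by producing an explicit induced $P_4$ in $G\times H$ (in the style used above, and of the example $P(C_3\times S_3)$, which is a $P_4$) or by locating a nilpotent subgroup that is neither a $p$-group nor a $C_{rs}$. What survives is exactly the description in case~(c); the condition $q^{m}\mid p-1$ together with $k$ having multiplicative order $q^{m}$ modulo $p$ is precisely what makes the faithful action defining $M$ possible.

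The main obstacle is the necessity argument in the common-prime case: organizing the sub-case analysis so that every $G\times H$ with a shared prime that is not a $p$-group and not of the form in (c) is excluded, and in particular proving that the non-abelian factor must be exactly the Frobenius group $C_p\rtimes C_{q^m}$ (no additional elements, cyclic complement, faithful action) while the other factor is exactly $C_q$. The sufficiency of (c) is also somewhat delicate, but it reduces to a bounded computation once the decomposition into pieces and the behaviour of $C_q\times\{1\}$ inside each piece are established.
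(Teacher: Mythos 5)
Your sufficiency argument for case (c) is sound, and it is a genuinely different route from the paper's: you cover $C_q\times M$ by the ``pieces'' $C_q\times K\cong C_{pq}$ and $C_q\times B$ for the $p$ Frobenius complements $B$, note that every edge of the power graph lies in a single piece, that a vertex outside $C_q\times\{1\}$ lies in a unique piece, and that inside a $q$-piece the non-identity elements of $C_q\times\{1\}$ are adjacent only to one another; the short case analysis you defer does close up (if neither middle vertex of a putative $P_4$ lies in $C_q\times\{1\}$ the whole path sits in one piece, and otherwise a middle vertex of order $q$ in $C_q\times\{1\}$ forces its two neighbours to lie in $C_q\times\{1\}$ or to generate $C_q\times K$, and every split yields a forbidden adjacency, e.g.\ an endpoint of $C_q\times\{1\}$ lies inside $\langle$generator of $C_q\times K\rangle$). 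The paper instead argues via the directed power graph and the fact that element orders in $C_q\times M$ are $p$, $q$-powers, or $pq$; your covering argument is closer in spirit to Theorem~\ref{t:maxcyclic}, adapted to maximal cyclic subgroups meeting in $C_q\times\{1\}$, and is a perfectly viable alternative.

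The genuine gap is in the necessity direction, precisely where the theorem's content lies. Your coprime case is fine, but in the shared-prime case you only state that ``a further sequence of sub-cases is needed'' and that each configuration is eliminated by an explicit $P_4$ or a bad nilpotent subgroup -- you do not actually carry this out, and the order condition $o(g)o(h)\in\{\text{prime power},\, rs\}$ alone is too weak to do it (for instance it does not exclude a third prime dividing one factor, since cross products of two distinct primes are allowed, nor does it say anything about normality of Sylow subgroups). The missing steps are exactly the structural core of the paper's proof: (i) ruling out the case where both primes divide both factors (the paper forces $|G|=|H|=pq$ non-abelian via a nilpotent Sylow-times-Sylow subgroup and then exhibits the path $(b,ab,a,ac)$ with $b,c$ non-adjacent elements of order $q$); (ii) pinning down the prime-power factor to have order exactly $q$ and $p$ to divide $|H|$ only once, by applying Theorem~\ref{t:nilp_cograph} to the nilpotent subgroups $G\times P$ and $C_q\times(\text{Sylow }p\text{ of }H)$; (iii) forcing the Sylow $p$-subgroup of $H$ to be normal (unique), again via the path $(b,ab,a,ac)$ built from two non-adjacent elements of order $p$ of $H$; and (iv) the endgame $C_H(P)=P$, so that $H/P$ embeds in $\Aut(C_p)\cong C_{p-1}$, which is what makes the Sylow $q$-subgroup of $H$ cyclic of order $q^m\mid p-1$ and the action faithful, i.e.\ what produces the presentation in (c). You have the right tools in hand (your path $(g_1,1)\sim(g_1,h)\sim(1,h)\sim(g_2,h)$ is the same construction as the paper's $(b,ab,a,ac)$), but without steps (i)--(iv) the claim ``what survives is exactly case (c)'' is an assertion, not a proof, and you yourself flag it as the main obstacle.
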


\begin{proof}
Let $P(G\times H)$ be a cograph. If the orders of $G$ and $H$ are each
divisible by exactly one prime then $G\times H$ is nilpotent; by Theorem 
\ref{t:nilp_cograph}, we have one of the first two cases. So we can suppose
that at least one of $G$ and $H$ has order divisible by two primes. 

Suppose that $|G|$ is divisible by primes $p$ and $q$. Then no prime except
possibly $p$ or $q$ can divide $H$. For suppose that $r\mid|H|$; let $a$ and
$b$ be elements of orders $p$ and $q$ in $G$, and $c$ an element of order $r$
in $H$. Then $(b,bc,c,ac)$ is an induced $P_4$ in $P(G\times H)$.

It follows that at most two primes divide each of $|G|$ and $|H|$.

Suppose first that both $p$ and $q$ divide each of $|G|$ and $H$. The direct
product of a Sylow $p$-subgroup of $G$ and a Sylow $q$-subgroup of $H$ is
nilpotent. So Theorem~\ref{t:nilp_cograph} implies that $|G|=|H|=pq$. Each is
non-abelian; so, without loss of generality, $q\mid p-1$. Let $a$ be an
element of order $p$ in $G$, and $b$, $c$ elements of order $q$ in $H$ which
are not joined in the power graph. Then $(b,ab,a,ac)$ is an induced $P_4$.

In the remaining case, one of $G$ and $H$ (say $G$, without loss of generality)
is a group of prime power order $q^m$. By assumption, $H$ is not a $q$-group,
and so contains a subgroup $P$ of prime order $p \neq q$. Then $G \times P$ is
a nilpotent subgroup of $G \times H$ with forbidden structure, unless $|G| = q$.
Now $p$ must divide $|H|$ to the first power only, and $|H| = pq^m$ for some
$m \geq 1$.

We claim next that $H$ has a normal Sylow $p$-subgroup $P$. For suppose
not, and let $b$ and $c$ be elements of order $p$ not adjacent in the power 
graph, and $a$ a non-identity element of $G$. Then $(b, ab, a, ac)$ is an
induced path in the power graph of $G \times H$, a contradiction.
As before, we conclude that $C_H(P) = P$, and so the Sylow $q$-subgroup of
$H$ (which is a complement to $P$) is cyclic of order dividing $p-1$. This 
yields the claimed structure for $G$ and $H$.

\medskip

For the converse, we begin with some preliminary remarks. If $x,y$ are elements
of a group $G$, and $x\to y$ in the directed power graph of $G$, then $y$ is
a power of $x$, and so $o(y)\mid o(x)$. If also $y\not\to x$, then the
divisibility is proper. Moreover, since $\to$ is a transitive relation, if
$(a,b,c,d)$ is an induced power graph in $P(G)$, then either
$a\to b\gets c\to d$ or the reverse.

So let
\[H=\langle a,b:a^p=b^{q^m}=1, b^{-1}ab=a^k\rangle,\]
and $G=C_q=\langle c\rangle$. Then all non-identity elements in $G\times H$
have orders a power of $q$, $p$, or $pq$. Also, if $(g,h)$ has order a power
of $q$, then $(g,h)^q=(1,h^q)$.

Suppose if possible that $(x_1,x_2,x_3,x_4)$ is an induced path in
$P(G\times H)$, where $x_i=(g_i,h_i)$, and suppose that
$x_1\to x_2\gets x_3\to x_4$ in $\vec{P}(G)$. Then none of $x_1,\ldots,x_4$ is
the identity, so $x_1$ and $x_3$ each have order a power of $q$ or $pq$.

If $x_3$ has order a power of $q$, then $g_2=g_4=1$ and $h_2$ and $h_4$ belong
to the same (cyclic) Sylow $q$-subgroup of $H$, so $x_2$ and $x_4$ are adjacent
in $P(G\times H)$, a contradiction. So $x_3$ has order $pq$, and $x_2$ and
$x_4$ have orders $p$ or $q$. If $x_2$ has order $p$, then $x_1$ has order
$pq$. On the other hand, if $x_2$ has order $q$, then $g_2\ne1$, so $x_2$
cannot be a $q$th power, so again $x_1$ has order $pq$. But this implies
that $x_1$ and $x_3$ are joined.
\end{proof}

\begin{remark}
The converse of the above is not true in general.
Consider $G=C_{4}$ and $H=C_{6}$. Then by Theorem \ref{t:nilp_cograph}, both $P(G)$ and $P(H)$ are cographs, whereas $P(G \times H)$ is not a cograph.
\end{remark}

\section{Minimal non-power-cograph groups}

Let $\mathcal{C}$ be the class of finite groups $G$ for which $P(G)$ is
a cograph. As noted earlier, $\mathcal{C}$ is subgroup-closed; so it can be
characterised by finding all minimal non-$\mathcal{C}$ groups.

\begin{theorem}
\label{minimal:cograph}
Let $G$ be a finite group. Then $P(G)$ is not a cograph if and only if
$G$ contains elements $g$ and $h$ with orders $pr$ and $pq$ respectively,
where $p,q,r$ are prime numbers and $p\ne q$, such that
\begin{enumerate}
\item $g^r=h^q$;
\item if $q=r$, then $g^p\notin\langle h^p\rangle$.
\end{enumerate}
\end{theorem}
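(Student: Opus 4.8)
The plan is to prove the two implications separately, using that $P(G)$ is a cograph exactly when it has no induced $P_4$. For the ``if'' direction, given $g$ and $h$ as in the statement I put $a=g^r=h^q$ (of order $p$) and $c=h^p$ (of order $q$) and show that $(g,a,h,c)$ is an induced path. The edges $ga$, $ah$, $hc$ hold since $a$ and $c$ are powers of $g$ or $h$; the pair $ac$ is a non-edge since $a,c$ have coprime prime orders; and if $g$ were joined to $h$ or to $c$ then comparing orders forces $q=r$, whereupon that edge would give $\langle g^p\rangle=\langle h^p\rangle$, contradicting hypothesis~(b). (When $q\neq r$ the order comparison alone excludes these edges, so (b) is needed only when $q=r$.) This is a short mechanical check.

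For the ``only if'' direction, suppose $P(G)$ contains an induced path on four vertices. Using transitivity of the relation $\to$ in $\vec{P}(G)$ — the observation already used in the proof of Theorem~\ref{product:cographs} — I may take it to be $(x_1,x_2,x_3,x_4)$ with $x_1\to x_2\gets x_3\to x_4$. Then $\langle x_2\rangle$ is a nontrivial subgroup of the cyclic group $\langle x_1\rangle\cap\langle x_3\rangle$; the non-edges give $x_1\notin\langle x_3\rangle$, $x_3\notin\langle x_1\rangle$ and $x_4\notin\langle x_1\rangle$; and, since $x_2$ and $x_4$ lie in the cyclic group $\langle x_3\rangle$, the non-edge $x_2x_4$ means $o(x_2)$ and $o(x_4)$ are incomparable under divisibility. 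I would then split into two cases. If one of $o(x_1),o(x_3)$ — say $o(x_1)$ — is neither a prime power nor a product of two distinct primes, then it is divisible by $p^2q$ or by $pqr$ for suitable primes $p\neq q$, and the cyclic group $\langle x_1\rangle$ already contains the required pair: take $h$ of order $pq$ and $g$ of order $p^2$ or $pr$, so that $g^r$ and $h^q$ both generate the unique order-$p$ subgroup of $\langle x_1\rangle$; after replacing $h$ by a suitable generating power we get $g^r=h^q$, and here $r\neq q$, so (b) is vacuous.

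In the remaining case both $o(x_1)$ and $o(x_3)$ are prime powers or products of two distinct primes, and I would finish by a short case analysis using that $1<o(x_2)$ divides $\gcd(o(x_1),o(x_3))$. They cannot both be prime powers, for then $\langle x_1\rangle$ and $\langle x_3\rangle$ would be cyclic $p$-groups for a common prime $p$, and $x_2,x_4\in\langle x_3\rangle$ would be joined in its complete power graph, against the non-edge $x_2x_4$. If exactly one of them, say $\langle x_1\rangle$, is a cyclic $p$-group then $o(x_1)=p^a$ with $a\ge2$, the prime $p$ divides $o(x_3)$, and $o(x_2)=p$; here I take $g\in\langle x_1\rangle$ of order $p^2$ and $h=x_3$. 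If both have order a product of two distinct primes they share a prime, which (relabelling if necessary) we call $p$, so that $o(x_2)=p$; here I take $g=x_1$ and $h=x_3$. In each subcase $g^r$ and $h^q$ generate the common order-$p$ subgroup $\langle x_2\rangle$, so after adjusting $h$ by a generating power we have $g^r=h^q$; and the only subcase in which (b) is not automatic is $o(x_1)=o(x_3)=pq$ (so $q=r$), where $g^p\notin\langle h^p\rangle$ holds because otherwise the order-$q$ subgroups of $\langle x_1\rangle$ and $\langle x_3\rangle$ would coincide, forcing $\langle x_1\rangle=\langle x_3\rangle$ and hence the edge $x_1x_3$, a contradiction.

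The main obstacle is the bookkeeping in the ``only if'' direction: one must choose the primes $p,q,r$ and the elements $g,h$ so that their orders are exactly $pr$ and $pq$ with $g^r=h^q$, and — the delicate point — verify condition (b) in the single subcase where it is not vacuous. It is precisely there that the hypothesis that the four-vertex path is induced gets used: the non-edge $x_1x_3$ yields (b), while the non-edges through $x_4$ are what rule out the degenerate all-prime-power configuration.
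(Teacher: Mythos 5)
Your argument is correct, but the ``only if'' direction takes a genuinely different route from the paper. The paper first passes to a minimal non-power-cograph group (legitimate because the class is subgroup-closed), disposes of the abelian case by quoting Theorem~\ref{t:nilp_cograph} (so the order is a product of three primes, with three explicit subcases), and in the nonabelian case uses minimality to know that the proper subgroup $\langle x_3\rangle$ is itself a power-cograph group, which forces $o(x_3)$ to be a prime power or $pq$; the element $g$ is then extracted from the end vertex of the path. You avoid both the minimal-counterexample reduction and the appeal to Theorem~\ref{t:nilp_cograph}: you work directly with an induced path in an arbitrary group, handle ``bad'' orders (divisible by $p^2q$ or $pqr$) by producing the required pair inside the cyclic group $\langle x_1\rangle$ or $\langle x_3\rangle$ itself, and finish with a short case analysis when both orders are prime powers or products of two distinct primes; the only group-theoretic input is that a cyclic $p$-group has a complete power graph, so your proof is more self-contained. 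You also make the ``if'' direction fully explicit (the non-edges $gh$ and $gc$ force $q=r$ and then contradict condition~(b)), whereas the paper simply asserts that $(g,g^r=h^q,h,h^p)$ is induced. Three small points deserve a line each: $x_2\neq1$ because the identity is adjacent to every vertex and so lies in no induced $P_4$; in the ``exactly one prime power'' subcase the prime-power order must be $o(x_1)$ rather than ``say'' $o(x_1)$ --- the roles of $x_1$ and $x_3$ are not symmetric, but if $\langle x_3\rangle$ were a $p$-group then $x_2,x_4\in\langle x_3\rangle$ would be adjacent, the same contradiction you already used; and $o(x_1)=p^a$ with $a\ge2$ because $a=1$ would give $\langle x_1\rangle=\langle x_2\rangle\le\langle x_3\rangle$, contradicting the non-edge $x_1x_3$. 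With those sentences added, the proof is complete.
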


\begin{proof}
Let $G$ be a minimal non-$\mathcal{C}$ group. 
Suppose first that $G$ is abelian. By Theorem~\ref{t:nilp_cograph}, it has order the
product of three primes which are not all equal. We distinguish three cases.
\begin{itemize}\itemsep0pt
\item Suppose that $|G|=pqr$ where $p,q,r$ are all distinct.
Then $G$ is cyclic; say $G=\langle x\rangle$. Now if we put $g=x^q$
and $h=x^r$, we see that the conditions of the theorem are satisfied.
\item Suppose that $|G|=p^2q$, and that the Sylow $p$-subgroup of $G$ is
cyclic, generated by $g$. Let $z$ be an element of order $q$, and $h=g^pz$.
Take $r=p$ in the conditions of the theorem.
\item Finally, suppose that $|G|=p^2q$ and the Sylow $p$-subgroup is elementary
abelian, generated by $x$ and $y$. Let $z$ be an element of order $q$.
Now take $g=xz$ and $h=yz$. Then $g$ and $h$ have order $pq$; $g^p=z^p=h^p$,
but $g^q=x^q\notin\langle x^p\rangle$. So these elements satisfy the conditions of
the theorem, if we take $r=q$ and reverse the roles of $p$ and $q$.
\end{itemize}
So we can suppose that $G$ is nonabelian.

Since $P(G)$ is not a cograph, there is an induced path $(a,b,c,d)$ in 
$P(G)$. As we saw in the proof of Theorem~\ref{product:cographs}, we may assume
that $a\to b\gets c\to d$ in $\vec{P}(G)$.

Now $\langle c\rangle$ is a cyclic group and contains $b$ and $c$. Since
$G$ is nonabelian, it is a proper subgroup, and hence its power graph is a
cograph. So the order of $c$ is either a prime power or of the form $pq$
where $p$ and $q$ are distinct primes. The former case is impossible. For
the power graph of a cyclic group of prime power order is complete, but
$b$ is not joined to $d$. So the order of $c$ is $pq$, with $p\ne q$. We may
suppose without loss that $b=c^q$ has order $p$ while $d=c^p$ has order $q$.

Now consider the element $a$. We know that the order of $a$ is divisible
by $p$ (the order of $b$). By replacing $a$ by a power of itself, we can
assume that the order of $a$ is $pr$, where $r$ is a prime which may or may not
be equal to $p$. (This power is still joined to $b$, but it cannot be joined to
$d$. For if $a$ and $d$ are joined, then
$d\in\langle a\rangle\cap\langle c\rangle=\langle b\rangle$, contradicting
the fact that $d$ has order $q$ whereas $b$ has order $p$. Also $a$ cannot be
joined to $c$, for this would imply that $a\to c$ and hence $a\to d$.)

We have now verified all the conditions of the theorem.

\medskip

Conversely, if these conditions hold, then $(g,g^r=h^q,h,h^p)$ is an induced
path of length $3$, so $P(G)$ is not a cograph. 
\end{proof} 

\begin{remark}
A minimal non-PCG group has nontrivial centre. For such a group is generated
by elements $g$ and $h$ as in the theorem, and $g^r=h^q$ is in the centre.
\end{remark}

\begin{corollary}
Let $G$ be a finite group. Let $P_2(G)$ be the set of non-identity elements
of $G$ whose orders are either prime or the product of two (not necessarily
distinct) prime numbers. Then $P(G)$ is a cograph if and only if the
induced subgraph on $P_2(G)$ is a cograph.
\end{corollary}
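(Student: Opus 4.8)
The plan is to prove the two implications separately; essentially all the substance is already contained in Theorem~\ref{minimal:cograph}.

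First I would handle the easy direction. Being a cograph means being $P_4$-free, which is a hereditary property: every induced subgraph of a $P_4$-free graph is again $P_4$-free. The graph induced on $P_2(G)$ is an induced subgraph of $P(G)$, so if $P(G)$ is a cograph then so is the subgraph induced on $P_2(G)$.

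For the converse I would argue contrapositively. Suppose $P(G)$ is not a cograph. By Theorem~\ref{minimal:cograph} there exist primes $p,q,r$ with $p\ne q$ and elements $g,h\in G$ with $o(g)=pr$, $o(h)=pq$ and $g^r=h^q$ (and $g^p\notin\langle h^p\rangle$ when $q=r$). As the proof of that theorem records, the $4$-tuple $(g,\,g^r,\,h,\,h^p)$, where $g^r=h^q$, is then an induced path in $P(G)$. The point of the corollary is now a one-line check on orders: the vertices $g$, $g^r$, $h$, $h^p$ have orders $pr$, $p$, $pq$, $q$ respectively, each of which is a non-trivial integer that is either a prime or a product of two (not necessarily distinct) primes. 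Hence all four vertices lie in $P_2(G)$, so this induced $P_4$ already occurs in the subgraph induced on $P_2(G)$, which is therefore not a cograph. By contraposition, if the subgraph on $P_2(G)$ is a cograph then so is $P(G)$.

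I do not anticipate any genuine obstacle here. The real work is Theorem~\ref{minimal:cograph}, which not only characterises the non-cograph groups but does so by exhibiting an explicit induced $P_4$ whose vertices have orders of the restricted form; the corollary is just the observation that such a $P_4$ is automatically contained in $P_2(G)$, together with the heredity of the cograph property. The only care needed is to confirm that the witness elements are non-identity (their orders $pr$, $p$, $pq$, $q$ are all at least $2$) and of admissible order, both of which are immediate.
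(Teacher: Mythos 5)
Your proof is correct and is exactly the intended argument: the paper states this corollary without proof as an immediate consequence of Theorem~\ref{minimal:cograph}, whose converse direction already exhibits the induced path $(g,g^r=h^q,h,h^p)$ with vertex orders $pr$, $p$, $pq$, $q$, all lying in $P_2(G)$, while the other direction is just heredity of the $P_4$-free property. No gaps.
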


Here is an application, which we will require later. Suppose that $G$ is a
finite group containing elements $a$ of order~$4$ and $b$ of order~$6$ such
that $a^2$ and $b^3$ are conjugate. Replacing $b$ by a conjugate, we may
assume that $a^2=b^3$. Now the theorem above implies that $G$ is not a
power-cograph group. These conditions can be verified for the simple groups
$M_{11}$ and $\PSU(3,8)$ using the $\mathbb{ATLAS}$ of finite groups
\cite{Conway}. We will use this argument several times, so we refer to it as
the \emph{$4$-$6$ test}.

\section{Examples}

Below we let $P^*(G)$ be the \emph{reduced power graph} of $G$, the induced
subgraph on the set $G^\#=G\setminus\{1\}$. Note that $P(G)$ is a cograph
if and only if $P^*(G)$ is a cograph.

We also make the following observation.

\begin{theorem}\label{t:maxcyclic}
Let $G$ be a finite group in which any two distinct maximal cyclic subgroups
intersect in the identity. Then $P(G)$ is a cograph if and only if the orders
of the maximal cyclic subgroups are either prime powers or products of two
distinct primes.
\end{theorem}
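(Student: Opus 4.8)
The plan is to exploit the hypothesis to show that the reduced power graph $P^*(G)$ splits as a disjoint union of the reduced power graphs of the maximal cyclic subgroups, and then to read off the answer from Theorem~\ref{t:nilp_cograph}.

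First I would observe that, under the hypothesis, every non-identity element of $G$ lies in a \emph{unique} maximal cyclic subgroup. It lies in at least one, since any cyclic subgroup is contained in a maximal one; and if $x \neq 1$ lay in two distinct maximal cyclic subgroups $C_1$ and $C_2$, then $x \in C_1 \cap C_2$, contrary to the assumption that distinct maximal cyclic subgroups meet only in the identity. Hence $G^\# = G \setminus \{1\}$ is the disjoint union of the sets $C^\# = C \setminus \{1\}$, as $C$ runs over the maximal cyclic subgroups of $G$.

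Next I would check that $P^*(G)$ has no edges between different parts of this partition. If $x \in C_1^\#$ and $y \in C_2^\#$ are adjacent in the power graph, then one is a power of the other, say $y \in \langle x \rangle \le C_1$; but then $y \in C_1 \cap C_2$, forcing $C_1 = C_2$. Since the power graph of any subgroup of $G$ is the induced subgraph of $P(G)$ on that subgroup, this shows that $P^*(G) = \bigsqcup_C P^*(C)$, a disjoint union of graphs. Now the characterisation is formal: cographs are closed under disjoint union, and every induced subgraph of a cograph is a cograph, so $P^*(G)$ is a cograph if and only if each $P^*(C)$ is, equivalently if and only if each $P(C)$ is. Since each such $C$ is cyclic, hence nilpotent, Theorem~\ref{t:nilp_cograph} tells us that $P(C)$ is a cograph precisely when $|C|$ is a prime power or $C$ is cyclic of order $pq$ with $p \neq q$ — that is, when $|C|$ is a prime power or a product of two distinct primes. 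Combining these statements gives the theorem.

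I do not expect a serious obstacle here. The only points that need care are the two structural verifications above — that the maximal cyclic subgroups partition $G^\#$, and that the power graph restricts to a disjoint union across this partition — together with the routine bookkeeping of the closure properties of cographs; the substantive content is imported directly from Theorem~\ref{t:nilp_cograph}.
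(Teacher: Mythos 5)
Your argument is correct and is essentially the paper's own proof: the paper likewise observes that every edge of $P(G)$ lies in a maximal cyclic subgroup, so under the hypothesis $P^*(G)$ is the disjoint union of the graphs $P^*(C)$ over the maximal cyclic subgroups $C$, and the conclusion follows from Theorem~\ref{t:nilp_cograph}. You have merely spelled out the partition of $G^\#$, the absence of cross edges, and the closure of cographs under disjoint union, which the paper leaves implicit.
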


\begin{proof}
Every edge of $P(G)$ is contained in a maximal cyclic subgroup of $G$. The
hypothesis implies that $P^*(G)$ is the union of $P^*(C)$ as $C$ runs over
the maximal cyclic subgroups of $G$.
\end{proof}

\begin{theorem}
\label{symm:cograph}
The symmetric group $S_n$ on $n$ symbols is a power-cograph group if and only
if $ n \leq 5$.
\end{theorem}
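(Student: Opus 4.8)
The plan is to split the argument into three ranges: $n\le 4$, $n\ge 6$, and the delicate borderline case $n=5$.

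For $n\le 4$, every element of $S_n$ has order $1,2,3$ or $4$, all prime powers, so $S_n$ is an EPPO group and Theorem~\ref{prime:cograph} gives that its power graph is a cograph (for $n\le 2$ the group is trivial or $C_2$ and there is nothing to prove). For $n\ge 6$, since the class of power-cograph groups is subgroup-closed and $S_6\le S_n$, it suffices to show $P(S_6)$ is not a cograph. Here I would simply exhibit elements as in Theorem~\ref{minimal:cograph}: take $g=(1\,2\,3)(4\,5)$ and $h=(1\,2\,3)(4\,6)$, both of order $6$, and set $p=3$, $q=r=2$; then $g^r=g^2=(1\,3\,2)=h^2=h^q$, while $g^p=g^3=(4\,5)\notin\{1,(4\,6)\}=\langle h^3\rangle=\langle h^p\rangle$, so $\bigl((1\,2\,3)(4\,5),\ (1\,3\,2),\ (1\,2\,3)(4\,6),\ (4\,6)\bigr)$ is an induced $P_4$ in $P(S_6)$. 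It is worth flagging that the \emph{$4$-$6$ test} is \emph{not} available here: in $S_6$ the square of an element of order $4$ is a product of two transpositions, whereas the cube of an element of order $6$ is a product of one or of three transpositions, so no such square and cube are conjugate; one genuinely needs the more refined Theorem~\ref{minimal:cograph}.

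The main case is $n=5$, where we must prove $P(S_5)$ \emph{is} a cograph even though $S_5$ is not an EPPO group (it contains elements of order $6$). By Theorem~\ref{minimal:cograph} it is enough to check that $S_5$ has no pair $g,h$ with $o(g)=pr$, $o(h)=pq$ ($p,q,r$ prime, $p\ne q$), such that $g^r=h^q$ and, if $q=r$, also $g^p\notin\langle h^p\rangle$. Since $pr$ and $pq$ are products of two primes and the element orders in $S_5$ are $1,2,3,4,5,6$, we must have $o(g),o(h)\in\{4,6\}$, leaving only the cases $(o(g),o(h))\in\{(4,4),(4,6),(6,4),(6,6)\}$. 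I would dispose of these in turn: $(4,4)$ and $(6,4)$ force $p=q=2$, contradicting $p\ne q$; $(4,6)$ forces $g^2$ to be a product of two transpositions and $h^3$ a single transposition, so $g^2\ne h^3$; and in $(6,6)$ one has $p\in\{2,3\}$ and $q=r$, and here the conditions fail for a structural reason — in $S_5$ the complement of the support of a transposition has only three points and so carries a \emph{unique} subgroup of order $3$, while the complement of the support of a $3$-cycle has only two points and so carries a \emph{unique} transposition. This ``no room'' phenomenon is precisely what prevents condition (b) of Theorem~\ref{minimal:cograph} from holding, and it is exactly what breaks down in $S_6$.

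The only step where any real work is required is the $n=5$ analysis, and even that reduces to a short finite check once one notices that only elements of order $4$ and $6$ can be relevant; the essential point is the counting observation about complements of supports in $S_5$. Getting the explicit induced $P_4$ in $S_6$ right is the other thing to be careful about, precisely because the simpler $4$-$6$ test does not suffice there.
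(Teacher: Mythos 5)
Your argument is correct, but for the ``only if $n\le 5$'' direction it takes a genuinely different route from the paper. For $n\ge 6$ the two proofs are essentially the same in spirit: the paper directly exhibits the induced path $\bigl((p\,q\,r)(x\,y),\,(x\,y),\,(q\,r\,z)(x\,y),\,(q\,z\,r)\bigr)$ in $S_6$ and uses subgroup-closure, while you produce an equivalent witness $\bigl((1\,2\,3)(4\,5),(1\,3\,2),(1\,2\,3)(4\,6),(4\,6)\bigr)$ via the converse direction of Theorem~\ref{minimal:cograph}; your side remark that the $4$-$6$ test is unavailable in $S_6$ is accurate but not needed, since the paper does not use it here. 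The real divergence is at $n\le 5$: the paper handles all these cases uniformly by Theorem~\ref{t:maxcyclic}, observing that the maximal cyclic subgroups pairwise intersect in the identity and have orders in $\{2\}$, $\{2,3\}$, $\{2,3,4\}$, $\{4,5,6\}$, whereas you treat $n\le 4$ by the EPPO criterion (Theorem~\ref{prime:cograph}) and then settle $S_5$ by a finite case analysis through Theorem~\ref{minimal:cograph}, noting that only orders $4$ and $6$ can occur as $pr$, $pq$, that cycle types rule out $g^2=h^3$, and that in the $(6,6)$ case the two-point (resp.\ three-point) complement of a $3$-cycle (resp.\ transposition) carries a unique transposition (resp.\ unique subgroup of order $3$), so condition (b) of Theorem~\ref{minimal:cograph} fails whenever (a) holds. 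The paper's route is shorter but silently requires verifying the trivial-intersection hypothesis of Theorem~\ref{t:maxcyclic} for $S_4$ and $S_5$ (which amounts to much the same uniqueness counting you make explicit); your route avoids that hypothesis entirely and has the added virtue of pinpointing exactly which structural feature breaks when one passes from $S_5$ to $S_6$. I verified your $(6,6)$ and $(4,6)$ cases and the $S_6$ witness; both are sound, so the proposal stands as a valid alternative proof.
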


 \begin{proof}

 for $n\geq 6$, $P(S_n)$ contain a path $(p q r)(x y) \sim (x y) \sim ( q r z)(x y) \sim (q z r)$ and thus $P(S_6)$ is not a cograph. 

For $n\leq5$, the maximal cyclic subgroups intersect in the identity, and their
orders are in the sets $\{2\}$ (for $n=2$), $\{2,3\}$ (for $n=3$), $\{2,3,4\}$
(for $n=4$), or $\{4,5,6\}$ (for $n=5$), so these symmetric groups are all
power-cograph groups, by Theorem~\ref{t:maxcyclic}.
\end{proof}

\begin{theorem}
Let $p$ and $q\ (<p)$ be primes and $G$ be the semidirect product of  $C_{p}$ by $ C_{q^{m}}$ acting faithfully on $C_{p}$. Then $P(G)$ is a cograph.
\end{theorem}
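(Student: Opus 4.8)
The plan is to show that $G$ is an EPPO group and then invoke Theorem~\ref{prime:cograph}. Since $C_{q^m}$ acts faithfully on $C_p$, its image in $\Aut(C_p)\cong C_{p-1}$ is cyclic of order $q^m$; in particular $q^m\mid p-1$ (so automatically $q<p$, as in the hypothesis), and $G$ has the presentation $\langle a,b\mid a^p=b^{q^m}=1,\ b^{-1}ab=a^k\rangle$ with $k$ of multiplicative order $q^m$ modulo $p$. The first thing I would record is that $\langle a\rangle$ is the unique Sylow $p$-subgroup of $G$, since it is normal; hence it is the unique subgroup of $G$ of order $p$.

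Next I would compute $C_G(a)$. Any element of order $q$ in $G$ is conjugate to $b^{q^{m-1}}$, which by faithfulness acts on $\langle a\rangle$ as an automorphism of order exactly $q$, hence fixed-point-freely on $\langle a\rangle\setminus\{1\}$ (a nontrivial automorphism of $C_p$ fixes only the identity). Therefore no element of order $q$ centralizes $a$, so $C_G(a)=\langle a\rangle$. From this I can rule out elements whose order is divisible by $pq$: if $x\in G$ had order $pq^{j}$ with $j\ge 1$, then the cyclic (hence abelian) subgroup $\langle x\rangle$ would contain the unique subgroup $\langle a\rangle$ of order $p$ together with a subgroup of order $q$, forcing $q\mid |C_G(a)|=p$, a contradiction. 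Consequently every non-identity element of $G$ has order either $p$ (if it lies in the normal subgroup $\langle a\rangle$) or a power of $q$; that is, $G$ is an EPPO group, and Theorem~\ref{prime:cograph} finishes the proof.

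There is essentially no obstacle here: the one point requiring care is the verification that $G$ has no element of order $pq$, which is precisely the fixed-point-free action argument above (equivalently, the observation that $G$ is a Frobenius group with kernel $\langle a\rangle$ and cyclic complement of order $q^m$). I would also note two alternative routes that give the same conclusion with slightly different bookkeeping: first, using the Frobenius structure, the maximal cyclic subgroups of $G$ are $\langle a\rangle$ and the conjugates of a complement, which pairwise intersect in the identity and all have prime-power order, so Theorem~\ref{t:maxcyclic} applies; second, $G$ is isomorphic to a subgroup of the direct product of $C_q$ with the group appearing in part~(c) of Theorem~\ref{product:cographs}, and $\mathcal{C}$ is subgroup-closed. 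I would present the EPPO argument as the main proof since it is the shortest and most self-contained.
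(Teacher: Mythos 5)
Your proof is correct and takes essentially the same route as the paper: the paper's one-line argument likewise rests on the observation that the faithful action forces $G$ to have no element of order $pq$, so that $G$ is an EPPO group whose maximal cyclic subgroups have orders $p$ and $q^m$, and you simply supply the details of that observation before invoking Theorem~\ref{prime:cograph}. (One cosmetic slip: an element of order $q$ need only be conjugate into $\langle b^{q^{m-1}}\rangle$, not necessarily to $b^{q^{m-1}}$ itself, which does not affect the argument.)
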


\begin{proof}
By assumption, there are no elements of order $pq$, so the orders of the maximal
cyclic subgroups are $p$ and $q^m$.
\end{proof}

\begin{theorem}
If $G$ is a dihedral group of order $2m$, then $G$ is a power-cograph group if
and only if $m$ is either a prime power or the product of two distinct primes.
\end{theorem}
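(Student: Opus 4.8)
\emph{Proof proposal.} The plan is to apply Theorem~\ref{t:maxcyclic}, exactly as in the preceding examples. Write $G=D_{2m}=\langle r,s: r^{m}=s^{2}=1,\ srs^{-1}=r^{-1}\rangle$, so that $G$ has the cyclic rotation subgroup $R=\langle r\rangle$ of order $m$, together with the $m$ reflections $sr^{i}$ ($0\le i\le m-1$), each of order $2$. The first task is to list the maximal cyclic subgroups of $G$ and to check that any two distinct ones intersect only in the identity, which is precisely the hypothesis of Theorem~\ref{t:maxcyclic}.

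For this I would argue as follows. Every rotation lies in $R$, so among the cyclic subgroups contained in $R$ the maximal one is $R$ itself. Next, for a reflection $t=sr^{i}$ one computes the centralizer $C_{G}(t)$ by a short calculation with the defining relations: $C_{G}(t)=\langle t\rangle\cong C_{2}$ when $m$ is odd, and $C_{G}(t)=\langle t,\,r^{m/2}\rangle\cong C_{2}\times C_{2}$ when $m$ is even, where $r^{m/2}$ is the unique central involution. In either case $C_{G}(t)$ has exponent $2$; since any cyclic subgroup containing $t$ is contained in $C_{G}(t)$, it must equal $\langle t\rangle$. Hence the maximal cyclic subgroups of $G$ are exactly $R$ (of order $m$) and the $m$ subgroups $\langle sr^{i}\rangle$ (of order $2$). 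As $R$ contains no reflection and the subgroups $\langle sr^{i}\rangle$ are pairwise distinct of prime order $2$, any two distinct maximal cyclic subgroups meet only in $1$.

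Given this, Theorem~\ref{t:maxcyclic} applies and shows that $P(G)$ is a cograph if and only if each of the numbers $m$ and $2$ (the orders of the maximal cyclic subgroups) is either a prime power or the product of two distinct primes. Since $2$ is a prime power, the condition reduces to the assertion that $m$ is a prime power or the product of two distinct primes, which is what we want. The degenerate cases $m=1,2$ give abelian groups and are already covered by Theorem~\ref{t:nilp_cograph}, consistently with this statement.

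I do not expect a real obstacle here: the only point requiring mild care is the verification of the intersection hypothesis of Theorem~\ref{t:maxcyclic}, that is, the brief parity case analysis used to pin down $C_{G}(t)$ and thereby conclude that each reflection lies in a unique cyclic subgroup, necessarily of order $2$. Once that is in place the theorem follows immediately.
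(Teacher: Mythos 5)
Your proposal is correct and follows essentially the same route as the paper: identify the maximal cyclic subgroups of $D_{2m}$ as $\langle r\rangle$ of order $m$ and the $m$ reflection subgroups of order $2$, note that any two distinct ones meet trivially, and apply Theorem~\ref{t:maxcyclic}. The paper states this in one line; your added centralizer argument just fills in the verification that each reflection lies in no larger cyclic subgroup.
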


\begin{proof}
The orders of maximal cyclic subgroups are $2$ and $m$, and intersection of any two cyclic subgroup is the identity.
\end{proof}

\subsection{Remarks on solvable groups}

Let $G$ be a solvable group and $G\in \mathcal{C}$. Let $F(G)$ be the Fitting subgroup of $G$.  Then by Theorem \ref{t:nilp_cograph}, $F(G)$ is either of prime power order or a cyclic group of order $pq$, where $p$ and $q$ are distinct primes.

First, let $F(G)=C_{pq}$ for distinct primes $p$ and $q$. Then $F(G)$ contains its centraliser, and so is equal to it; so $G/F(G)$ acts as a group of automorphisms of $F(G)$. Thus $G$ is contained in the group $(C_p:C_{p-1})\times(C_q:C_{q-1})$. If $G$ contains a direct product larger than $C_p\times C_q$, then this
product is described by Theorem~\ref{product:cographs}: it has the form
$(C_p:C_{q^m})\times C_q$. If $G$ is strictly larger than this, then it
contains an element of prime order $r$ with $r\mid p-1$ and $r\mid q-1$, acting
non-trivially on both $C_p$ and $C_q$. But then $G$ contains a subgroup
$C_q\times(C_q:C_r)$, contrary to Theorem~\ref{product:cographs}.

Otherwise the structure of $G$ is $(C_p\times C_q).C_r$ where $r$ divides both $p-1$ and $q-1$, and $r$ is either a prime power or the product of two primes.
Such a group is a PCG group, since its maximal cyclic subgroups have orders
$pq$ or $r$.

\medskip

Next suppose that $F(G)$ be a $p$-group. We divide this case into two subcases.
 
 If all the elements of $G$ are of prime power order then the prime graph of $G$ is a null graph, and hence $G\in \mathcal{C}$. Higman \cite{Higman} gave a nice characterization of such groups. And in that case $|G|$ has at most two prime divisors and $G/F(G)$ is one of the following:
 \begin{enumerate}
     \item 
 a cyclic group whose order is a power of a prime other than $p$.
 \item 
 a generalized quaternion group, $p$ being odd; or 
 \item a group of order $p^aq^b$
with cyclic Sylow subgroups, $q$ being a prime of the form $kp^a+1$.
 \end{enumerate}

But difficulties arise when $F(G)$ is a $p$-group and $G$  contains elements whose order is not a prime power. By Theorem \ref{t:nilp_cograph}, the order of any element in a group in $\mathcal{C}$ is either
a prime power or the product of two primes. This case can occur; here
are two examples: 
\begin{example}
The Frobenius group $F_7$ of order 42 has $P(F_7)$ a cograph. Here $|F_7|$ is divisible by 3 primes, $F_7$ contains an element of order 6, and it's Fitting subgroup $C_7$.
\end{example}
\begin{example}
Let $G$ be the semidirect product of the Heisenberg group $H_3$ of order 27 by $C_2$. Then $G$ is solvable and $G\in \mathcal{C}$. In this case, the Fitting subgroup $F(G)=H_3$, and $G$ contains elements of order 6.
\end{example} 
 
 \begin{problem}
Classify all solvable $\mathcal{C}$-groups whose Fitting subgroup is a $p$-group.
 \end{problem}

 \section{Finite simple groups}

 In this section we discuss simple groups whose power graphs are cographs. For each prime $p$, the simple group $C_p$ has complete power graph, therefore it is a power-cograph group. In the next theorem we classify alternating groups which are power-cograph groups.

 \begin{theorem}
 \label{alter:cograph}
The alternating group $A_n$ is a power-cograph group if and only if $n\leq 6$.
\end{theorem}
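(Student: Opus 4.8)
The plan is to use the same strategy that worked for $S_n$: for large $n$ exhibit an induced $P_4$, and for small $n$ either apply Theorem~\ref{t:maxcyclic} or fall back on the $4$-$6$ test and explicit checking. First I would handle the ``only if'' direction. For $n \geq 7$ I would look for an element configuration inside $A_n$ analogous to the one used in Theorem~\ref{symm:cograph}: take two disjoint $3$-cycles on disjoint supports together with a common product structure. Concretely, for $n \geq 7$ one can use a $3$-cycle $\sigma$ on points $\{1,2,3\}$ and build two elements $\sigma\tau$ and $\sigma\tau'$ of order $6$ or $12$ whose cubes (or appropriate powers) coincide with $\sigma$, together with two noncommuting/nonpower-related elements of prime power order, arranged so that the four vertices form an induced path; the cleanest route is the $4$-$6$ test, finding $a$ of order $4$ and $b$ of order $6$ in $A_n$ with $a^2 = b^3$ — for $n \geq 7$ one can take $a = (1\,2\,3\,4)(5\,6)$ (even, order $4$) and $b = (1\,3)(2\,4)(5\,6\,7)$ (even, order $6$), and check $a^2 = (1\,3)(2\,4) = b^3$, so Theorem~\ref{minimal:cograph} (via the $4$-$6$ test) shows $P(A_n)$ is not a cograph. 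One needs $n \geq 7$ for both permutations to be even and supported within $n$ points — this is the reason the bound is $6$ and not $5$.

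For the ``if'' direction I would treat $A_1,\dots,A_6$ individually. For $n \leq 4$, $A_n$ is small: $A_1, A_2$ trivial, $A_3 = C_3$ has complete power graph, $A_4$ has element orders $\{1,2,3\}$ and its maximal cyclic subgroups ($C_2$'s and $C_3$'s) intersect trivially, so Theorem~\ref{t:maxcyclic} applies with all orders prime. For $A_5 \cong \PSL(2,4) \cong \PSL(2,5)$, the element orders are $1,2,3,5$, the maximal cyclic subgroups have orders $2$, $3$, $5$ (all prime) and pairwise intersect in the identity — indeed any two distinct maximal cyclics in $A_5$ meet trivially since a nontrivial common element would force a cyclic subgroup of order a proper multiple — so Theorem~\ref{t:maxcyclic} gives that $P(A_5)$ is a cograph. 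The one genuinely delicate case is $A_6$.

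For $A_6$ the element orders are $1,2,3,4,5$, so $A_6$ has \emph{no} elements of order $6$ and hence the $4$-$6$ test does not apply; more to the point, every element has prime power order, so $A_6$ is an EPPO group and Theorem~\ref{prime:cograph} immediately gives that $P(A_6)$ is a cograph. (Equivalently: the Gruenberg--Kegel graph of $A_6$ on $\{2,3,5\}$ has no edges.) This is the step I expect to be the main obstacle to get right — not because it is hard, but because one must notice that $A_6$ is EPPO while $S_6$ is not, which is exactly what separates the two theorems. I would state this observation explicitly and cite Theorem~\ref{prime:cograph}. Assembling the pieces: $P(A_n)$ is a cograph for $n \leq 5$ by Theorem~\ref{t:maxcyclic}, for $n = 6$ by Theorem~\ref{prime:cograph}, and fails for $n \geq 7$ by the $4$-$6$ test, completing the proof.
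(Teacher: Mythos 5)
Your proof is correct and takes essentially the same route as the paper: the $4$-$6$ test for $n\ge7$, and for $n\le6$ the observation that these alternating groups are EPPO (Theorem~\ref{prime:cograph}), supplemented in your case by Theorem~\ref{t:maxcyclic} for $n\le5$. Your witness $a=(1\,2\,3\,4)(5\,6)$ is in fact the better choice, since it is an even permutation, whereas the paper's stated $a=(1,2,3,4)$ is odd and so not literally an element of $A_n$ --- your version quietly repairs that slip.
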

 
\begin{proof}
For $n\ge7$, the $4$-$6$ test is applicable, with $a=(1,2,3,4)$ and
$b=(1,3)(2,4)(5,6,7)$.

Now we consider $n \leq 6$.

If $n=3$ then $A_3$ is nothing but the cyclic group $C_3$ and hence its power graph is the complete graph $K_3$ and hence a cograph.

For $n=4,5,6$ then prime graph of $A_n$ is a null graph and by Theorem \ref{prime:cograph} the power graph is a cograph.
\end{proof}

In the next few sections we discuss simple groups of Lie type of low rank or over small fields and sporadic simple groups.
Information about specific groups is found in the
$\mathbb{ATLAS}$~\cite{Conway}, and further information about the simple groups
and their subgroups is in Rob Wilson's book~\cite{wilson}.

We also use the fact that $\mathcal{C}$ is subgroup-closed; so, if a group $G$
contains a subgroup not in $\mathcal{C}$, then $G\notin\mathcal{C}$.

\subsection{Simple groups of Lie type of rank $1$}
The simple groups of Lie  type of rank $1$ are $A_1(q)=\mathrm{PSL}(2,q)$,
$^2A_2(q)=\PSU(3,q)$, $^2B_2(q)=\Sz(q)$ where $q=2^{2e+1}$, and
$^2G_2(q)=R_1(q)$ where $q=3^{2e+1}$.

In \cite{Cameron}, Cameron proved that, if $q$ is an odd prime power, then the power graph of $\PSL(2,q)$ is a cograph if a only if $(q-1)/2$ and $(q+1)/2$ are either prime powers or product of two primes. And if $q\geq 4$ is a power of 2 then the power graph of $\PSL(2,q)$ is a cograph if and only if $q-1$ and $q+1$ are either prime
powers or products of two distinct primes.

Next we show that power graph of $\PSU(3,q)$ is not a cograph for $q\neq 2$.
Since $\PSU(3,2)$ is not simple, there are no simple power-cograph groups
of this type.

\begin{theorem}
Let $q$ be a power of a odd prime $p$. Then power graph of $\PSU(3,q)$ is not a cograph.
\end{theorem}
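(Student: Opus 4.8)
The plan is to exhibit, inside $\PSU(3,q)$, a subgroup (or an explicit pair of elements) that violates the criterion of Theorem~\ref{minimal:cograph}, i.e.\ elements $g$ of order $pr$ and $h$ of order $pq'$ (primes, $p \ne q'$) with $g^r = h^{q'}$ and, when $q' = r$, $g^p \notin \langle h^p\rangle$. Equivalently — and this is probably the cleanest route — I would use Theorem~\ref{t:maxcyclic} or simply produce a nilpotent subgroup of forbidden type: an elementary abelian $p$-group of order $p^2$ lying inside a cyclic-times-$C_{q'}$ configuration, or more directly, show that $\PSU(3,q)$ contains a subgroup isomorphic to $C_p \times C_p \times C_{q'}$ for some prime $q' \ne p$ dividing $|\PSU(3,q)|$, which by Theorem~\ref{t:nilp_cograph} is not a PCG group. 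The natural source of a $C_p \times C_p$ is the unipotent radical of a Borel subgroup: the Sylow $p$-subgroup of $\PSU(3,q)$ has order $q^3$ and its centre has order $q$, and for $q = p$ the whole Sylow subgroup has exponent $p$ (when $p \ge 5$; for $p = 3$ one must be slightly careful). So for $p \ge 5$ and $q = p$, a Borel subgroup $C_p^{(1+2)} : C_{(q^2-1)/d}$ already contains $C_p \times C_p \times C_{q'}$ where $q'$ is any prime dividing $(q^2-1)/d$ (such a prime exists since $q^2 - 1 \ge 24$), and we are done.

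For the remaining cases — $p = 3$, or $q = p^f$ with $f \ge 2$ — I would instead chase elements directly. When $f \ge 2$ the field $\mathbb{F}_{q^2}$ contains $\mathbb{F}_{p^2}$, so $\PSU(3,q)$ contains $\PSU(3,p)$ as a subgroup (a subfield subgroup), reducing everything to the base case $q = p$; and $\PSU(3,p)$ for $p \ge 5$ was just handled. That leaves only $\PSU(3,3)$ (order $6048 = 2^5 \cdot 3^3 \cdot 7$). Here the Sylow $3$-subgroup has exponent $3$ only in its quotient by the centre, so I would look instead for an element of order $3 \cdot 2$ or $3 \cdot 7$; in fact $\PSU(3,3)$ contains elements of order $6$ and of order $12$ (consult the $\mathbb{ATLAS}$~\cite{Conway}), and one checks the power-compatibility condition ($g^2 = h^3$ for suitable $g$ of order $12$, $h$ of order $6$) — this is exactly a local instance of the $4$-$6$ test, or one verifies it from the character table / conjugacy class data. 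Alternatively, $\PSU(3,3)$ contains $\SL(2,7)$-related subgroups or a subgroup $C_3 \times C_3$ inside a maximal torus/parabolic together with a commuting involution, again giving a forbidden nilpotent subgroup.

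The main obstacle I anticipate is the small-field / small-prime case $\PSU(3,3)$ (and perhaps double-checking $p = 3$, $q = 3$ versus the exponent-$p$ claim for the Sylow subgroup, since for $p = 3$ the unipotent radical need not be elementary abelian). Everything else follows from a uniform structural observation about the parabolic (Borel) subgroup plus the subfield-subgroup containment $\PSU(3,p) \le \PSU(3,p^f)$. So the write-up would be: (1) reduce to $q = p$ prime via subfield subgroups; (2) for $p \ge 5$, exhibit $C_p \times C_p \times C_{q'}$ in a Borel subgroup and invoke Theorems~\ref{t:nilp_cograph} and the subgroup-closure of $\mathcal{C}$; (3) dispatch $\PSU(3,3)$ by hand using the $4$-$6$ test or explicit element orders from the $\mathbb{ATLAS}$. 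I would present step~(2) as the heart of the argument and keep (1) and (3) brief.
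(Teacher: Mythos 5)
Your plan has genuine gaps in both of the steps that carry the argument. The heart, step (2), fails: the Borel subgroup of $\PSU(3,p)$ does \emph{not} contain $C_p\times C_p\times C_{q'}$ for a prime $q'\ne p$. Write the Borel subgroup as $P:T$ with $|P|=q^3$ and $T$ cyclic of order $(q^2-1)/\gcd(3,q+1)$, using the Hermitian form given by the antidiagonal matrix, so $T$ consists of the images of $\mathrm{diag}(\alpha,\alpha^{q-1},\alpha^{-q})$. Parametrising $P$ by pairs $(a,b)$ with $b+b^q+a^{q+1}=0$, conjugation by such a torus element sends $(a,b)$ to $(\alpha^{2-q}a,\ \alpha^{q+1}b)$. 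Since $\gcd(q-2,q^2-1)$ divides $3$, a torus element of prime order $q'>3$ fixes no unipotent element outside $Z(P)$, and fixes $Z(P)$ only when $q'\mid q+1$; the order-$3$ torus elements that fix more are scalars, hence trivial in $\PSU$. For $q=p$ prime, $Z(P)\cong C_p$ has rank one, so the Sylow $p$-subgroup of the centraliser of any $p'$-element of prime order is at most $C_p$ and the subgroup $C_p\times C_p\times C_{q'}$ you want is simply not there (indeed it is absent from all of $\PSU(3,p)$, as centralisers of semisimple elements have $p$-part of order at most $q$; compare the $\mathbb{ATLAS}$ data for $\PSU(3,5)$, where no element of order $2$ or $3$ centralises an elementary abelian group of order $25$). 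Ironically, your construction does work when $q=p^f$, $f\ge2$, since then $Z(P)$ is elementary abelian of order $q$ — exactly the case you propose to remove by reduction.

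The reduction, step (1), is also false in general: $\PSU(3,p)\le\PSU(3,p^f)$ is a subfield subgroup only when $f$ is odd, because for $f$ even the involution $x\mapsto x^{q}$ of $\mathbb{F}_{q^2}$ restricts to the identity on $\mathbb{F}_{p^2}$, so the Hermitian structure does not restrict. The containment can fail outright: $|\PSU(3,9)|=2^5\cdot3^6\cdot5^2\cdot73$ is not divisible by $7$, so $\PSU(3,3)\not\le\PSU(3,9)$. Your disposal of $\PSU(3,3)$ via elements of order $12$ (the $4$-$6$ test) is fine, but it is essentially the only surviving piece. For comparison, the paper's proof is much shorter and avoids all structure theory of parabolics: $\PSU(3,q)$ contains a cyclic subgroup of order $(q-1)\cdot(q+1)/\gcd(3,q+1)$; both displayed factors are even, so if the power graph were a cograph this cyclic group would have to have prime power order, necessarily a power of $2$, which forces $q\in\{3,5\}$; and those two groups are excluded directly ($\PSU(3,3)$ has elements of order $12$, and $A_7\le\PSU(3,5)$). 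If you want to salvage your approach, you would need a different forbidden configuration for $q=p$ (for instance an explicit $P_4$ built from elements of orders $2p$ or $2q'$ and $2q''$ sharing an involution, in the spirit of Theorem~\ref{minimal:cograph}), not the nilpotent subgroup you proposed.
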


\begin{proof}
We use the fact that $\PSU(3,q)$, $q$ odd, has a cyclic subgroups of order
$(q^2-1)/\gcd(q+1,3)=(q-1)\cdot(q+1)/\gcd(q+1,3)$.
So, if the power graph is a cograph, then both $(q-1)$ and
$(q+1)/\gcd(q+1,3)$ are primes, or else both are powers of the same prime.
But both these numbers are even; so they must both be powers of $2$.
Since one of $q-1$ and $q+1$ is not divisible by $4$, we must have
$(q-1,q+1)=(2,4)$ or $(4,6)$, so $q=3$ or $5$.

Now for $q=3$ the group $\PSU(3,3)$ contains elements of order~$12$, so the power graph is not a cograph. On the other hand $\PSU(3,5)$ contains $A_7$. Therefore the power graph of $\PSU(3,q)$ is not a cograph.
\end{proof}

\begin{theorem}
	Let $q\geq 4$ be a power of  $2$. Then the power graph of $\PSU(3,q)$ is not a cograph.
\end{theorem}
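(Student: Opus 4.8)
The plan is to produce inside $\PSU(3,q)$, for $q$ an even prime power with $q\ge 4$, a subgroup isomorphic to a direct product $C_m\times\mathrm{SL}(2,q)$ with $m\ge 2$. Since $\mathrm{SL}(2,q)\cong\PSL(2,q)$ is non-abelian simple for $q\ge 4$, no direct product of this shape satisfies any of the three conditions of Theorem~\ref{product:cographs}, so $C_m\times\mathrm{SL}(2,q)\notin\mathcal{C}$; and since $\mathcal{C}$ is subgroup-closed, it follows that $\PSU(3,q)\notin\mathcal{C}$, i.e.\ its power graph is not a cograph.

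To find the subgroup, recall that $\mathrm{SU}(3,q)$ contains a block-diagonal copy of $\mathrm{GU}(2,q)$: writing the Hermitian form as the orthogonal sum of a non-degenerate line and a non-degenerate plane, the map $M\mapsto\mathrm{diag}\bigl((\det M)^{-1},M\bigr)$ embeds $\mathrm{GU}(2,q)$ into $\mathrm{GU}(3,q)$, and since $\det M$ has norm $1$ the total determinant is $1$, so the image lies in $\mathrm{SU}(3,q)$. For $q$ even, $q+1$ is odd, so the central subgroup $\mu_{q+1}I_2\cong C_{q+1}$ of $\mathrm{GU}(2,q)$ meets $\mathrm{SU}(2,q)\cong\mathrm{SL}(2,q)$ trivially and $\mathrm{GU}(2,q)\cong C_{q+1}\times\mathrm{SL}(2,q)$. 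Passing to $\PSU(3,q)=\mathrm{SU}(3,q)/Z$ with $d:=|Z|=\gcd(3,q+1)$, the preimage of $Z$ inside this copy of $\mathrm{GU}(2,q)$ consists only of the scalars $\omega^j I_2$ with $\omega^d=1$, which lie in the cyclic factor; hence the image of $\mathrm{GU}(2,q)$ in $\PSU(3,q)$ is $C_{(q+1)/d}\times\mathrm{SL}(2,q)$. With $m=(q+1)/d$ one has $m\ge 2$ for every even $q\ge 4$ (in fact $m\ge 3$, the minimum $m=3$ occurring at $q=8$), giving the required subgroup.

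The verification that $C_m\times\mathrm{SL}(2,q)\notin\mathcal{C}$ is then immediate from Theorem~\ref{product:cographs}: for $q\ge 4$ the order $q(q-1)(q+1)$ of $\mathrm{SL}(2,q)$ is not a prime power (it is even and divisible by the odd number $q-1\ge 3$), and $\mathrm{SL}(2,q)$ is neither cyclic of prime order nor the solvable metacyclic group of case~(c); so cases~(a), (b), (c) all fail irrespective of $C_m$. The one point needing care — and the main, though minor, obstacle — is the behaviour of the central quotient when $d=3$: one must confirm that the central $C_3$ of $\mathrm{SU}(3,q)$ is absorbed into the cyclic direct factor, leaving the $\mathrm{SL}(2,q)$ factor intact, and that $(q+1)/3\ge 2$, which fails only for the excluded value $q=2$. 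A less uniform alternative would invoke the cyclic subgroup of $\PSU(3,q)$ of order $(q^2-1)/d$, exactly as in the odd case treated earlier; but that only settles the $q$ for which $q-1$ and $(q+1)/d$ are not both prime, leaving cases such as $q=8$ to a separate argument, which is why the direct-product approach is preferable here.
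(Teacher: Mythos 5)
Your argument is correct, and it takes a genuinely different route from the paper. The paper works with explicit matrices: it picks a prime $p>3$ dividing $q+1$, builds elements $g,h,k$ of $\SU(3,q)$ with $o(g)=2$, $o(h)=p$, $k^2=h$, $o(gh)=2p$, and exhibits the induced path $(g,gh,h,k)$, then passes to the quotient; this construction breaks down when $q+1$ has no prime divisor exceeding $3$, i.e.\ at $q=8$, which the paper must then dispose of separately by the $4$-$6$ test using the $\mathbb{ATLAS}$. You instead use the structural fact that the stabiliser of a nondegenerate point gives an embedded $\mathrm{GU}(2,q)\le\SU(3,q)$, which for even $q$ splits as $C_{q+1}\times\SU(2,q)\cong C_{q+1}\times\SL(2,q)$ because $q+1$ is odd; after factoring out the centre (which you correctly locate inside the cyclic factor) one gets $C_{(q+1)/d}\times\SL(2,q)\le\PSU(3,q)$ with $(q+1)/d\ge3$ for all even $q\ge4$, and this product violates every case of Theorem~\ref{product:cographs}, so subgroup-closedness of $\mathcal{C}$ finishes the proof. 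Your approach buys uniformity: no case split and no appeal to $\mathbb{ATLAS}$ data for $q=8$, at the cost of invoking the subgroup structure of the unitary group rather than a self-contained matrix computation; it is also closer in spirit to how the paper itself treats $\PSp(4,q)$ and several sporadic groups, namely by locating a direct-product subgroup forbidden by Theorem~\ref{product:cographs}.
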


\begin{proof} Let $\beta$ be a generator of the multiplicative group of
$\mathrm{GF}(q^2)$. Then $\beta^{q-1}$ has order $q+1$. Let $p$ be a prime factor of $q+1$ greater than~$3$, and let $d=(q+1)/p$. Then $\alpha=\beta^{d(q-1)}$ has order $p$. Then $\overline{\alpha}=\beta^{d(q^2-q)}$, so $\alpha\overline{\alpha}=\beta^{d(q^2-1)}=1$ in $\mathrm{GF}(q^2)$. Consider the elements
$$g=\begin{pmatrix}
0&1&0\\
1&0&0\\
0&0&1\\
\end{pmatrix}$$
$$h=\begin{pmatrix}
\alpha&0&0\\
0&\alpha&0\\
0&0&\alpha^{-2}\\
\end{pmatrix}$$
$$k=\begin{pmatrix}
0&\alpha&0\\
1&0&0\\
0&0&\alpha^{-1}\\
\end{pmatrix}$$
Then $g$ is a element of order 2 and it commutes with $h$. So $o(gh)=o(hk)=2p$. On the other hand $k^2=h$.

Therefore the elements  the elements $g,gh,h,k$ induce a path of length~$3$ in 
$\SU(2,q)$.

Now observe that $g,h,k\in\SU(3,q)\setminus Z$. Take $\gamma=gZ$ and $\eta=hZ$, $\kappa=kZ$. Then the elements
$\gamma,\gamma\eta,\eta,\kappa$ induce a path of length~$3$ in the power graph of $\PSU(3,q)$.

The argument fails for $q=8$.  But we saw earlier that $\PSU(3,8)$ is not a
power-cograph group, using the $4$-$6$ test.
\end{proof}

\begin{theorem}
Let $G={}^2B_2(q)=\Sz(q)$,  $q=2^{2e+1}$. Then $G\in\mathcal{C}$ if and only if each of
$q-1$, $q+\sqrt{2q}+1$ and $q-\sqrt{2q}+1$ is either a prime power or the
product of two distinct primes.
\end{theorem}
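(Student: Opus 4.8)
The plan is to reduce the whole question to Theorem~\ref{t:nilp_cograph} by showing that the reduced power graph $P^*(\Sz(q))$ is a \emph{disjoint union} of reduced power graphs of subgroups whose structure is fully understood. Write $n_1=q-1$, $n_2=q+\sqrt{2q}+1$, $n_3=q-\sqrt{2q}+1$, so that $|\Sz(q)|=q^2n_1n_2n_3$ and $n_2n_3=q^2+1$; here $q=2^{2e+1}\ge8$ and $\sqrt{2q}=2^{e+1}$. The structural facts about $G=\Sz(q)$ I would quote (see the standard references on the Suzuki groups, e.g.~\cite{wilson}) are: (i) the element orders of $G$ are $1$, $2$, $4$, together with the divisors of $n_1$, $n_2$ and $n_3$, and every element of odd order lies in a conjugate of one of the cyclic maximal tori $T_1,T_2,T_3$, of orders $n_1,n_2,n_3$; (ii) the Sylow $2$-subgroups of $G$ (which have order $q^2$) form a TI-set, and for each $i$ the conjugates of $T_i$ form a TI-set. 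I would also record that $n_1,n_2,n_3$ are pairwise coprime: since $q\equiv1\pmod{n_1}$ and $n_1$ is odd, $\gcd(n_1,q^2+1)=\gcd(n_1,2)=1$, so $n_1$ is coprime to $n_2n_3$; and any common divisor of $n_2$ and $n_3$ divides $n_2n_3=q^2+1$, which is odd, and also divides $n_2-n_3=2^{e+2}$, hence is $1$.

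The key claim is that any two distinct subgroups taken from the collection of all Sylow $2$-subgroups and all conjugates of $T_1,T_2,T_3$ intersect only in the identity. For two Sylow $2$-subgroups, or two distinct conjugates of the same $T_i$, this is the TI property from (ii); for a Sylow $2$-subgroup and a torus, or for two tori of different orders, it is forced by coprimality of the relevant orders ($q^2$ versus $n_i$, or $n_i$ versus $n_j$ for $i\ne j$). Combined with (i), it follows that the non-identity parts of these subgroups partition $G\setminus\{1\}$; moreover every edge of $P^*(G)$ lies inside one of the subgroups, since an edge joins $u$ and $v$ with (say) $u\in\langle v\rangle$, and the cyclic group $\langle v\rangle$, having order a power of $2$ or a divisor of some $n_i$, is contained in a Sylow $2$-subgroup or in a maximal torus. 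Hence $P^*(G)$ is the disjoint union of the reduced power graphs of the Sylow $2$-subgroups of $G$ and of the maximal tori of $G$.

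To finish I would apply Theorem~\ref{t:nilp_cograph} together with the closure of the class of cographs under disjoint union (an induced $P_4$ is connected, so it must lie inside a single component). The reduced power graph of a Sylow $2$-subgroup is a cograph, being the reduced power graph of a $2$-group; the reduced power graph of a maximal torus, which is cyclic of order $n_i$, is a cograph if and only if $n_i$ is a prime power or a product of two distinct primes. Therefore $P^*(G)$ --- equivalently $P(G)$ --- is a cograph if and only if each of $n_1,n_2,n_3$ is a prime power or a product of two distinct primes, which is the assertion of the theorem. (The ``only if'' direction also follows directly from subgroup-closure of $\mathcal{C}$: if some $n_i$ fails the condition, then $C_{n_i}\le G$ has non-cograph power graph by Theorem~\ref{t:nilp_cograph}.)

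The graph-theoretic bookkeeping is routine; the real content is the structural input on $\Sz(q)$ in~(i) and~(ii), especially the TI property of the Sylow $2$-subgroups and of each family of maximal tori, which is exactly what makes the ``pieces'' overlap only in the identity. A point worth flagging is that the maximal \emph{cyclic} subgroups of $\Sz(q)$ do \emph{not} all meet trivially --- two cyclic subgroups of order $4$ inside a common Sylow $2$-subgroup can share an involution --- so Theorem~\ref{t:maxcyclic} cannot be invoked directly, and one genuinely needs the coarser decomposition into Sylow $2$-subgroups and tori together with Theorem~\ref{t:nilp_cograph} for $2$-groups. As an alternative to the disjoint-union argument, one can run Theorem~\ref{minimal:cograph}: any obstructing pair $(g,h)$ there must, after the case analysis on parities, consist of elements of order $n_i$ for a single $i$ with $n_i$ a product of two distinct primes, and then (using that the torus is cyclic and self-centralizing) $g$ and $h$ both generate the torus containing their common power, so condition~(b) of that theorem fails.
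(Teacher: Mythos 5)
Your proof is correct, and it is a close cousin of the paper's argument rather than a departure from it: both proofs work by trapping any induced $P_4$ inside a single ``piece'' of a decomposition of $G\setminus\{1\}$ and then reading off the condition on $q-1$, $q\pm\sqrt{2q}+1$ from Theorem~\ref{t:nilp_cograph}. The difference is in the choice of pieces. The paper decomposes along \emph{maximal cyclic} subgroups (orders $4$, $q-1$, $q\pm\sqrt{2q}+1$) and uses only the pairwise coprimality of these orders to argue that a $P_4$ cannot straddle two of them; you instead use the coarser Suzuki partition into Sylow $2$-subgroups and the three families of cyclic maximal tori, quoting the TI property of each family, and then apply Theorem~\ref{t:nilp_cograph} to the $2$-group pieces as well as to the cyclic ones. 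Your version buys some extra rigour: as you rightly flag, distinct maximal cyclic subgroups of $\Sz(q)$ of order $4$ can share an involution, so the hypothesis of Theorem~\ref{t:maxcyclic} fails and the paper's coprimality argument, taken literally, only rules out an element lying in maximal cyclic subgroups of \emph{different} orders; the paper silently relies on the facts you make explicit (TI-ness of the tori, and that the $2$-local part cannot host a $P_4$ -- in the paper's framing because a cyclic group of order $4$ is too small, in yours because the reduced power graph of a $2$-group is a cograph). The cost of your route is the heavier structural input (element orders, the partition, TI-ness), which the paper only needs in weakened form; both are standard facts about $\Sz(q)$, so either way the theorem stands, and your disjoint-union formulation (plus the remark that the ``only if'' direction is just subgroup-closure applied to $C_{n_i}$) is a clean packaging of the same idea.
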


\begin{proof}
Any edge of the power graph is contained in a maximal cyclic subgroup.
The maximal cyclic subgroups of $\Sz(q)$ have orders
$4$, $q-1$, $q+\sqrt{2q}+1$ and $q-\sqrt{2q}+1$. These four numbers are
pairwise coprime. (The last three are odd. The difference between the third
and fourth is a power of $2$, but $2$ does not divide either. Suppose that
$p$ is a prime dividing both $q-1=2^{2e+1}-1$ and
$q+\sqrt{2q}+1=2^{2e+1}+2^{e+1}+1$. Then $p$ divides their difference,
$2^{e+1}+2$; since it is odd, it divides $2^e+1$, and hence it divides
$2^{2e}-1$, and also $2^{2e+1}-2$. This $p$ divides $1$. The argument for
$q-1$ and $q-\sqrt{2q}+1$ is similar.) Thus no element can lie in maximal
cyclic subgroups of different orders. So, if the power graph contains 
$P_4$, then this $P_4$ must be contained in a maximal cyclic subgroup, so
this subgroup must have three prime divisors, not all equal. The converse
is clear.
\end{proof}

Now let $G={}^2G_2(q)=R_1(q)$, $q=3^{2e+1}$.
The centraliser of an involution in $G$ is $C_2\times\PSL(2,q)$, which contains
subgroups $C_2\times C_{(q\pm1)/2}$. So, if $G\in\mathcal{C}$, then
$(q\pm1)/2$ is either prime or a power of $2$. If it is a power of $2$, then
we have a solution to Catalan's equation, contradicting the result of
Mih\u{a}ilescu’s Theorem: see \cite[Section 6.11]{Cohn}.
The numbers $(q\pm1)/2$ have opposite parity, so cannot both be prime. So
$G$ is not a power-cograph group.

\subsection{Simple groups of Lie type of rank 2}

The rank 2 simple groups of Lie type are
$A_2(q)=\PSL(3,q)$, $C_2(q)=\PSp(4,q)$, ${}^2A_3(q)=\PSU(4,q)$, ${}^2A_4(q)=\PSU(5,q)$, $G_2(q)$, ${}^2F_4(q)$ and ${}^3D_4(q)$. We examine each of the above cases. In the case of $A_2(q)$, we prove a slightly stronger result, for later use.

\begin{theorem}
Let $G$ be a quotient of $\SL(3,q)$ by a subgroup of the group of scalars.
If $G$ is a power-cograph group, then $q=2$ or $q=4$.
\end{theorem}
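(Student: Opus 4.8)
The plan is to prove the contrapositive: if $q\notin\{2,4\}$, then every quotient $G=\SL(3,q)/Z$ of $\SL(3,q)$ by a group $Z$ of scalars fails to be a power-cograph group. Since $\mathcal{C}$ is subgroup-closed, in each case it suffices to produce inside $G$ either a small non-$\mathcal{C}$ subgroup or a configuration of elements of the kind described in Theorem~\ref{minimal:cograph}. Writing $d=\gcd(3,q-1)$ for the order of the full scalar group of $\SL(3,q)$, the one recurring subtlety will be that the chosen witnesses must survive the quotient by $Z$; throughout I will arrange that the relevant cyclic subgroups have order coprime to $d$ (which divides $3$) and use that conjugacy descends to quotients. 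I would treat $q$ odd and $q$ even separately.

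For $q$ odd, embed $\SL(2,q)$ in $\SL(3,q)$ as the stabiliser of the third basis vector, $B\mapsto\mathrm{diag}(B,1)$. A scalar matrix in the image has $(3,3)$-entry $1$, hence equals $I$, so this copy of $\SL(2,q)$ meets the scalars of $\SL(3,q)$ trivially and embeds into $G$; it is therefore enough to show $\SL(2,q)\notin\mathcal{C}$. Let $p=\operatorname{char}\mathrm{GF}(q)$, let $v$ be an element of order $4$ with $v^2=-I$ (for instance $v=\bigl(\begin{smallmatrix}0&1\\-1&0\end{smallmatrix}\bigr)$), and let $u=-(I+E_{12})$, which has order $2p$ with $u^p=-I$. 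Then $v$ has order $2\cdot 2$, $u$ has order $2\cdot p$ with $p$ odd, $v^{2}=u^{p}=-I$, and there is no extra condition to verify since $p\neq 2$; so Theorem~\ref{minimal:cograph} (with $2$ playing the role of the shared prime) shows that $P(\SL(2,q))$ is not a cograph, whence $G\notin\mathcal{C}$. This disposes of all odd $q\ge 3$.

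For $q=2^{f}$ with $f\ge 3$ I would run a ``$4$--$2\ell$ test'', a variant of the $4$--$6$ test. First observe that $q-1$ always has a prime divisor $\ell\neq 3$: if $f$ has an odd prime factor $g$, then $2^{g}-1$ divides $q-1$, exceeds $3$, and is prime to $3$ (as $g$ is odd); while if $f=2^{k}$ with $k\ge 2$, then $q-1=(2^{2^{k-1}}-1)(2^{2^{k-1}}+1)$ with the second factor $\ge 5$ and $\equiv 2\pmod 3$. Now choose $\zeta\in\mathrm{GF}(q)^{\times}$ of order $\ell$, let $a\in\SL(3,q)$ be a regular unipotent element (so $a$ has order $4$ and $a^{2}$ is a transvection), let $U$ be a nontrivial $2\times 2$ unipotent matrix, and set $b=\mathrm{diag}(\zeta U,\zeta^{-2})\in\SL(3,q)$; then $b$ has order $2\ell$ and $b^{\ell}=\mathrm{diag}(U,1)$ is a transvection. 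All transvections are conjugate in $\SL(3,q)$, so $a^{2}$ and $b^{\ell}$ are conjugate; and since $|\langle a\rangle|=4$ and $|\langle b\rangle|=2\ell$ are coprime to $d$ (here we use $\ell\neq 3$), the images $\bar a,\bar b$ still have orders $4$ and $2\ell$ in $G$, with $\bar a^{\,2}$ conjugate to $\bar b^{\,\ell}$. Replacing $b$ by a conjugate so that $\bar a^{\,2}=\bar b^{\,\ell}$, Theorem~\ref{minimal:cograph} applies (shared prime $2$, with $r=2$ and $q=\ell$, so condition (b) is vacuous) and yields an induced $P_{4}$ in $P(G)$; hence $G\notin\mathcal{C}$, and all even $q\ge 8$ are excluded.

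Combining the two cases, $G\in\mathcal{C}$ forces $q\in\{2,4\}$. I expect the main obstacle to be exactly the bookkeeping around the quotient: one must guarantee that the forbidden configuration is not collapsed when passing from $\SL(3,q)$ to $G$, which is why the witnesses above only ever involve cyclic subgroups of order prime to $d$ together with a conjugacy of transvections that survives every quotient. (For $G=\SL(3,q)$ itself one can argue more crudely, since $\SL(3,q)$ has a cyclic subgroup of order $q^{2}-1$, and for $q$ even $q^{2}-1=(q-1)(q+1)$ is a product of two coprime integers greater than $1$, hence a prime power or a product of two distinct primes only when $q-1$ and $q+1$ are twin primes, forcing $q=4$; the role of the $4$--$2\ell$ test is to make the conclusion uniform over all scalar quotients, which is the form needed in the later sections.)
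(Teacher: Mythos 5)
Your argument is correct, and it follows the same overall strategy as the paper --- exhibiting, for every $q\notin\{2,4\}$, a ``$4$ versus $2\ell$'' configuration of elements sharing a common power, and checking it survives the passage to the scalar quotient --- but several of your ingredients differ in a worthwhile way. For odd $q$ the paper writes down explicit $3\times3$ matrices $g$ of order $4$ and $h$ of order $6$ with $g^2=h^3=\mathrm{diag}(-I_2,1)$ and verifies that no nontrivial power of either is scalar; you instead observe that the embedded copy of $\SL(2,q)$ meets the scalars trivially and then kill $\SL(2,q)$ itself via Theorem~\ref{minimal:cograph} with $v$ of order $4$ and $u=-(I+E_{12})$ of order $2p$, $v^2=u^p=-I$, which handles the quotient more cleanly through subgroup-closure. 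For even $q>4$ the paper finds a prime $p>3$ dividing $q-1$ (invoking Catalan/Mih\u{a}ilescu when $q$ is an even power of $2$) and builds $h=g^2k$ with $k$ commuting with $g^2$ so that $h^p=g^2$ exactly; you only need a prime $\ell\ne3$ dividing $q-1$, which you obtain by an elementary factorisation of $2^f-1$ (so Catalan is not needed), and you align $a^2$ with $b^\ell$ using the single conjugacy class of transvections in $\SL(3,q)$, preserving orders in the quotient because $4$ and $2\ell$ are coprime to $|Z|\mid 3$. Both routes are sound; the paper's buys completely explicit induced paths, while yours buys a softer quotient argument and a more elementary number-theoretic input. (Your closing parenthetical about the cyclic subgroup of order $q^2-1$ is inessential and anyway only addresses $G=\SL(3,q)$ itself, as you note.)
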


\begin{proof}
We work in $\mathrm{SL}(3,q)$. Suppose that $q$ is odd. Consider the elements
\[g=\begin{pmatrix}
0&-1&0\\
1&0&0\\
0&0&1\\
\end{pmatrix}
\qquad
h=\begin{pmatrix}
0&-1&0\\
1&1&0\\
0&0&1\\
\end{pmatrix}
\]
It is easily checked that
\[g^2 =h^3=\begin{pmatrix}
-I_2&O\\
 O&1 \\
\end{pmatrix}\]

So
$(h^2,h,h^3=g^2,g)$ is an induced path of length $3$ in the power graph.

Now observe that neither $g$ nor $h$ contains any non-identity scalar matrix.
So these elements project onto elements with the same property in the quotient
when a group of scalars is factored out.

\medskip

Now we consider $q$ to be a power of 2, with $q>4$. If $q$ is an odd power
of $2$, then $q-1$ is not divisible by $3$, while if $q$ is an even power
of $2$, then $q-1$ cannot be a power of $3$ (according to the solution of
Catalan's equation) and so must have a larger prime divisor.

Let $\alpha$ be an element of the multiplicative group of
$\mathrm{GF}(q)$ of prime order $p$ greater than $3$. Consider the elements
\[g=\begin{pmatrix}
1&1&0\\
0&1&1\\
0&0&1\\
\end{pmatrix}
\qquad
k=\begin{pmatrix}
\alpha&0&0\\
0&\alpha^{-2}&0\\
0&0&\alpha\\
\end{pmatrix}.\]
It is routine to check that
\[g^2=\begin{pmatrix}
1&0&1\\
0&1&0\\
0&0&1\\
\end{pmatrix},\]
an element of order~$2$; and that $g^2$ commutes with $k$, so that $g^2k$ has
order $2p$, and $(g^2k)^p=g^2$.

Putting $h=g^2k$, we have $h^p=g^2$, so the elements
$g,g^2=h^p,h,h^2$ induce a path of length~$3$.

No power of any of these elements except the identity is a scalar. (For this
we need $p>3$, since if $p=3$ then $\alpha^{-2}=\alpha$.) So factoring out
a group of scalars we get elements with the same properties.

Finally we note that $\PSL(3,2)$ and $\PSL(3,4)$ are power-cographs (as their
Gruenberg--Kegel graphs are null). However, $\PSL(3,2)\cong\PSL(2,7)$, so
this group does not need to be included in the statement of the theorem.
\end{proof}

\begin{theorem}
Let $G=\PSp(4,q)$. Then $P(G)$ is not a cograph.
\end{theorem}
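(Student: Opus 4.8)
The plan is to exhibit, inside $\PSp(4,q)$, either an induced $P_4$ in the power graph directly or a configuration that the earlier machinery (Theorem~\ref{minimal:cograph} or the $4$-$6$ test) already rules out. The natural source of such configurations is a subgroup with a rich cyclic structure: $\PSp(4,q)$ contains $\SL(2,q)\circ\SL(2,q)$ (a central product) and also $\SL(2,q^2)$ or related tori, and more usefully it contains subgroups isomorphic to (quotients of) $\SL(3,q)$? No --- rather, the honest approach is to use the known subgroup structure: $\PSp(4,q)$ has maximal cyclic tori of orders roughly $(q^2-1)/d$, $(q^2+1)/d$, $(q-1)^2/d$-type, and centralizers of semisimple elements of the form $\SL(2,q)$-by-torus. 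I would split into the cases $q$ even and $q$ odd, as the previous proofs in this section do.

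For $q$ odd, the key observation is that $\PSp(4,q)$ contains a subgroup $\mathrm{Sp}(2,q)\times\mathrm{Sp}(2,q)/\{\pm1\}=\SL(2,q)\circ\SL(2,q)$; inside this one finds elements $g$ of order $4$ (a lift of an element of order $2$ paired with a central involution) and $h$ of order $6$ (using that $\SL(2,q)$ contains elements of order $3$ when $3\mid q\pm1$, or switching to an element of order $2p$ otherwise) with $g^2=h^3$ equal to the central involution, so the $4$-$6$ test applies --- or, when $3\nmid|\SL(2,q)|$ fails, one instead produces $g$ of order $4$ and $h$ of order $2r$ for a suitable odd prime $r$ with $g^2=h^r$, invoking Theorem~\ref{minimal:cograph} directly with $p=2$. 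Concretely, take a diagonal torus element of order dividing $q-1$ in one $\SL(2,q)$ factor and pair it with the central involution in the other; this is essentially the same device used in the $\SL(3,q)$ proof above, so I expect only routine verification. For $q$ even there is no central involution, so instead I would use that $\PSp(4,q)$ contains $\SL(2,q^2).2$ or a subgroup $\mathrm{Sp}(4,q)\supseteq\mathrm{Sp}(2,q)\wr\mathrm{(something)}$; more cleanly, $\PSp(4,q)\cong\mathrm{Sp}(4,q)$ contains $\SL(2,q)\times\SL(2,q)$ honestly (no quotient needed), and one can pick in the first factor an element $a$ of order $2p$ where $p$ is an odd prime dividing $q-1$ or $q+1$, and in the second factor an element $b$ of order $2$; then $ab$ has order $2p$, $(ab)^p=b\cdot a^p$... this needs the right bookkeeping. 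A smoother route for $q$ even: $\PSp(4,q)$ contains $\PSp(4,2)\cong S_6$ as a subgroup whenever the prime field is involved? That only gives $q=2$. So I would instead argue that $\PSp(4,q)$ for $q$ even contains an element of order $2(q-1)$ and an element of order $2(q+1)$ in suitable tori, together with transvections, arranging an induced $P_4$ through a transvection of order $2$ exactly as in the $\SU(3,q)$, $q$ even, proof.

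The clean uniform statement I would aim for: $\PSp(4,q)$ always contains a subgroup of the form $(C_p\times C_q).2$ or a torus of order divisible by three primes not all equal, forcing a non-cograph by Theorem~\ref{t:maxcyclic} or Theorem~\ref{minimal:cograph}. The main obstacle is the small cases: as with $\SU(3,q)$ and $\PSL(3,q)$, for tiny $q$ (say $q=2,3$) the generic torus arithmetic degenerates, and one must check $\PSp(4,2)\cong S_6$ (which is \emph{not} a power-cograph group by Theorem~\ref{symm:cograph}), $\PSp(4,3)\cong\PSU(4,2)$, and perhaps $q=4,5$ by hand using the $\mathbb{ATLAS}$ --- for instance $\PSp(4,3)$ has elements of order $12$, so its power graph is not a cograph, and similarly for $\PSp(4,4)$ and $\PSp(4,5)$ one exhibits elements of order $4$ and $6$ with $a^2=b^3$ via the $4$-$6$ test. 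So the proof has the shape: (i) handle $q=2,3$ (and any other exceptional small $q$ forced by the arithmetic) via explicit element orders or isomorphisms; (ii) for $q$ odd and not exceptional, use the $4$-$6$ test or Theorem~\ref{minimal:cograph} inside $\SL(2,q)\circ\SL(2,q)$; (iii) for $q$ even and not exceptional, use a transvection-plus-torus induced $P_4$ as in the even-characteristic $\SU(3,q)$ argument. I expect step (iii)'s bookkeeping with the transvection commuting with the right torus element to be the fiddliest part, but structurally it mirrors a proof already in the paper.
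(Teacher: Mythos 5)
Your odd-$q$ branch can indeed be made to work, and in fact more simply than you describe: one factor $\SL(2,q)$ of the central product meets the centre of $\mathrm{Sp}(4,q)$ trivially, so it embeds in $\PSp(4,q)$, and for every odd $q$ it already contains an element of order $4$ and an element of order $6$ (a torus element when $3\mid q^2-1$, or $-I$ times a unipotent of order $3$ when $3\mid q$) whose square, respectively cube, equals the unique involution $-I$; the $4$-$6$ test then finishes, with no exceptional odd $q$ at all. Your list of small cases ($\PSp(4,2)\cong S_6$ via Theorem~\ref{symm:cograph}, elements of order $12$ in $\PSp(4,3)$, the $4$-$6$ test for $\PSp(4,4)$) agrees with the paper; $q=5$ needs no special treatment.

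The genuine gap is the generic even-characteristic case. There you never actually produce a configuration: you propose a ``transvection-plus-torus'' induced $P_4$ modelled on the even-characteristic $\PSU(3,q)$ argument, defer the verification as ``bookkeeping'', and moreover never determine for which even $q$ that construction is available, so the set of exceptional $q$ to be checked by hand is not pinned down by your argument. The clean way to close this --- and it is the paper's route, uniformly in the parity --- is the torus you already have: inside $\SL(2,q)\times\SL(2,q)\le\mathrm{Sp}(4,q)$ take $C_{q-1}\times C_{q+1}$; for $q$ even the factors are coprime, so this is a cyclic group of order $q^2-1$, which by Theorem~\ref{t:nilp_cograph} (and subgroup-closure) is allowed only if $q-1$ and $q+1$ are both prime, and since $3$ divides one of them this forces $q\in\{2,4\}$, exactly your hand-checked cases. (For $q$ odd the same subgroup modulo the identified involution is cyclic of order $(q^2-1)/2$, divisible by $4$ and, for $q>3$, by an odd prime, so it too fails Theorem~\ref{t:nilp_cograph}; this is how the paper avoids your explicit element-hunting.) Finally, note that Theorem~\ref{t:maxcyclic} gives a \emph{sufficient} condition for a power graph to be a cograph and cannot be invoked to show a graph is not a cograph; what you need there is simply that a cyclic subgroup whose order has three prime factors, not all equal, violates Theorem~\ref{t:nilp_cograph}.
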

\begin{proof}
A $4$-dimensional symplectic space is the direct sum of two $2$-dimensional
symplectic spaces; and the $2$-dimensional symplectic group is the special
linear group. So $G=\PSp(4,q)$ contains a subgroup which is the direct product
of two copies of $\PSL(2,q)$ if $q$ is even, or the central product of two
copies of $\SL(2,q)$ if $q$ is odd.

Thus $G$ contains the direct product of cyclic groups of orders $q\pm1$ if
$q$ is even, and a quotient of this by a subgroup of order~$2$ if $q$ is odd.

For $q$ even, $q-1$ and $q+1$ are coprime, so $P(G)$ is a cograph only if 
both are primes; since one is divisible by $3$, this requires $q=2$ or
$q=4$.

For $q$ odd, one of $(q-1)/2$ and $(q+1)/2$ is even, so the order of the
cyclic subgroup is divisible by $4$ and (if $q>3$) by at least one further
prime. So $P(G)$ is a cograph only if $q=3$.

Now $\PSp(4,2)\cong S_6$ is not simple; $\PSp(4,3)$ contains elements of 
order~$12$; and $\PSp(4,4)$ is ruled out by the $4$-$6$ test.
\end{proof}

\begin{theorem}
The power graph of $G_2(q)$ is not a cograph.
\end{theorem}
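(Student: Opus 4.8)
The plan is to exhibit, for every prime power $q$, a suitable subgroup of $G_2(q)$ that is already known not to be a power-cograph group, or to apply the $4$-$6$ test directly. The key structural fact is that $G_2(q)$ contains $\SL(3,q)$ as a subgroup (more precisely, a subgroup isomorphic to $\SL(3,q)$, or at least a quotient of it by a central subgroup) — this comes from the embedding of $A_2$ as a subsystem subgroup of the root system $G_2$. By the theorem above on quotients of $\SL(3,q)$, any such subgroup fails to be a power-cograph group unless $q=2$ or $q=4$. Since $\mathcal{C}$ is subgroup-closed, this immediately handles all $q \notin \{2,4\}$: if $G_2(q)$ were a PCG-group, so would be its $\SL(3,q)$-type subgroup, contradiction.

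It remains to dispose of the two small cases $q=2$ and $q=4$. For $q=2$, the group $G_2(2)$ is not simple (its derived subgroup has index $2$ and is isomorphic to $\PSU(3,3)$), and in any case $G_2(2)' \cong \PSU(3,3)$ contains elements of order $12$, so by Theorem~\ref{t:nilp_cograph} (a cyclic subgroup of order $12$ is nilpotent but neither a prime power nor a product of two distinct primes) its power graph is not a cograph; hence neither is that of $G_2(2)$. For $q=4$, I would apply the $4$-$6$ test: one checks in the $\mathbb{ATLAS}$~\cite{Conway} that $G_2(4)$ contains an element of order $4$ and an element of order $6$ whose relevant powers (the square of the first, the cube of the second, both being involutions) are conjugate in $G_2(4)$; since $G_2(4)$ has a single class of involutions, any order-$4$ element squares into that class and any order-$6$ element cubes into it, so the required conjugacy is automatic. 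Then the $4$-$6$ test shows $G_2(4)\notin\mathcal{C}$.

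The main obstacle is pinning down the precise form of the $A_2$-type subgroup and making sure the argument about scalars goes through: the subsystem subgroup of type $A_2$ inside $G_2(q)$ need not be exactly $\SL(3,q)$ but could be $\SL(3,q)/Z$ for some central $Z$ (or, over certain fields, related to $\SU(3,q)$ via the long/short root subsystems). This is exactly why the theorem above was stated for an arbitrary quotient of $\SL(3,q)$ by a subgroup of the scalars — so whichever form the subgroup takes, the conclusion $q\in\{2,4\}$ still applies. I would therefore cite Wilson's book~\cite{wilson} for the existence of the $A_2$ subsystem subgroup and invoke the strengthened $\SL(3,q)$ theorem without needing to track the centre explicitly; the small-case verifications for $q=2,4$ are then routine $\mathbb{ATLAS}$ look-ups.
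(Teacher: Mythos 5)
Your overall architecture is essentially the paper's: embed an $A_2$-type subsystem subgroup (a quotient of $\SL(3,q)$ by a central subgroup of scalars), invoke the strengthened theorem on such quotients to eliminate all $q\notin\{2,4\}$, and treat the small cases separately; your treatment of $q=2$ via $G_2(2)'\cong\PSU(3,3)$ and its elements of order $12$ is exactly what the paper does. The paper avoids a separate $q=4$ argument by using \emph{both} subsystem subgroups $\SL(3,q)$ and $\SU(3,q)$: after the congruence-mod-$3$ case split it always finds $\PSL(3,q)$ or $\PSU(3,q)$ inside $G_2(q)$, and the earlier theorems on $\PSU(3,q)$ kill $q=4$ automatically, leaving only $q=2$.

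The genuine gap is in your $q=4$ step. You justify the $4$-$6$ test by asserting that $G_2(4)$ has a single conjugacy class of involutions, so that the conjugacy of $a^2$ and $b^3$ is ``automatic''. That assertion is false: $G_2(4)$ has two classes of involutions ($2A$ and $2B$ in the $\mathbb{ATLAS}$~\cite{Conway}), as is typical for $G_2(q)$ in characteristic $2$, so the conjugacy is not automatic and you would need to verify the power maps (which class the order-$4$ elements square into versus which class the order-$6$ elements cube into). The conclusion is easily salvaged: the $\mathbb{ATLAS}$ shows that $G_2(4)$ contains elements of order $12$, so a cyclic subgroup $C_{12}$ already violates Theorem~\ref{t:nilp_cograph} (equivalently, powering an order-$12$ element gives the $4$-$6$ configuration directly); alternatively, follow the paper and use the subgroup $\SU(3,4)=\PSU(3,4)$ of $G_2(4)$, which is not a power-cograph group by the earlier theorem on $\PSU(3,q)$ for even $q\ge4$. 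As written, though, the $q=4$ case rests on an incorrect structural claim and does not stand.
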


\begin{proof}
The group $G_2(q)$ contains both $\SL(3,q)$ and $\SU(3,q)$
\cite{cooperstein, kleidman2}. Now
$\SL(3,q)=\PSL(3,q)$ if $q\not\equiv1\pmod{3}$, while $\SU(3,q)=\PSL(3,q)$ if
$q\not\equiv-1\pmod{3}$. So, for any $q$, $G_2(q)$ contains either $\PSL(3,q)$
or $\PSU(3,q)$. Now the former is in $\mathcal{C}$ only for $q=2$ or $q=4$,
and the latter is never in $\mathcal{C}$ except for $q=2$ (this group is not
simple). So the only case needing further consideration is $q=2$; but $G_2(2)$
is not simple, and is not in $\mathcal{C}$ (it contains $\PSU(3,3)$ as a
subgroup of index~$2$).
\end{proof} 
Below we give arguments for the rest of the simple groups of Lie type of
rank~$2$.
We find that in each of the following cases the power graph is not a cograph.
\begin{itemize}\itemsep0pt
\item
Let $G={}^2A_3(q)=\PSU(4,q)$. This group contains $\PSp(4,q)$, so we only need consider $q=2$. But
$\PSU(4,2)\cong\PSp(4,3)$.
\item 
The group $G={}^2A_4(q)=\PSU(5,q)$ contains $\PSU(4,q)$. So $G\notin \mathcal{C}$.
\item
The group ${}^2F_4(2^d)$ contains ${}^2F_4(2)$ for all odd $d$ 
(Malle~\cite{malle}), and ${}^2F_4(2)$ is ruled out by the $4$-$6$ test.
\item 
The group {$G={}^3D_4(q)$} contains $G_2(q)$ (see Kleidman~\cite{kleidman}).
\end{itemize}

\subsection{Higher rank}

Let $G$ be a simple group of Lie type of higher rank. We show that $P(G)$ is not a cograph. 

Since the Dynkin diagram of $G$ contains a single bond in all cases, $G$ has
a subgroup of a Levi factor which is a quotient of $\SL(3,q)$ by a group of
scalars. The results of the preceding section give the desired conclusion
if $q\notin\{2,4\}$.

It remains to deal with groups over the fields of $2$ or $4$ elements.

Now $\PSL(4,2)\cong A_8$, so its power graph is not a cograph,
while $\PSp(6,2)$ is excluded by the $4$-$6$ test. Moreover, $\PSL(4,4)$
contains $\PSL(4,2)$, and $\PSp(6,4)$ contains $\PSp(6,2)$ (by restricting
scalars). The orthogonal and unitary groups of Lie rank~$3$ all contain
$\PSp(4,q)$ for $q=2$ or $q=4$. So $P(G)$ is not a cograph.

\subsection{Sporadic simple groups}

Now we prove that there exist no sporadic simple group whose power graph is cograph. Recall that there are 26 sporadic simple groups \cite{Conway}, namely,  the five Mathieu groups ($M_{11}$, $M_{12}$, $M_{22}$, $M_{23}$ and $M_{24}$), four Janko groups ($J_1$, $J_2$, $J_3$ and $J_4$), three Conway groups ($Co_1$, $Co_2$ and $Co_3$), three Fischer groups ($Fi_{22}$, $Fi_{23}$ and $Fi_{24}$), 
Higman–Sims group ($HS$), the McLaughlin group ($M^cL$), the Held group $He$,
the Rudvalis group ($Ru$), the Suzuki group ($Suz$), the O'Nan group ($O'N$), the Harada–Norton group $HN$, the Lyons group ($Ly$), the Thompson group ($Th$)
the Baby Monster group ($B$) and the Monster group ($M$). Amongst these 26 groups the the Mathieu group $M_{11}$ is of smallest order ($|M_{11}|=7920=2^4 \cdot 3^2\cdot 5 \cdot 11 $). 

\begin{observation}
\label{Mathiew:cograph}
We observe, using information in the $\mathbb{ATLAS}$ of Finite Groups
\cite{Conway}, that $M_{11}$ is not a power-cograph group, by the $4$-$6$ test;
it contains elements $a,b$ of orders $4$ and $6$ respectively with $a^2=b^3$.
\end{observation}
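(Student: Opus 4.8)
The plan is to deduce this from the $4$-$6$ test introduced immediately after Theorem~\ref{minimal:cograph}. By that test it is enough to produce an element $a$ of order $4$ and an element $b$ of order $6$ in $M_{11}$ for which $a^2$ and $b^3$ are conjugate; then, after replacing $b$ by a suitable conjugate, we may assume $a^2=b^3$, and the conclusion follows.

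To supply these elements I would simply read off the conjugacy class data of $M_{11}$ from the $\mathbb{ATLAS}$~\cite{Conway}. The orders of elements of $M_{11}$ are $1,2,3,4,5,6,8,11$; in particular $M_{11}$ contains an element $a$ with $o(a)=4$ and an element $b$ with $o(b)=6$. Moreover $M_{11}$ has a \emph{unique} class of involutions (the class $2A$). Since $a^2$ and $b^3$ are both involutions, they lie in this single class and hence are conjugate, so we may take $a^2=b^3$. Now Theorem~\ref{minimal:cograph} applies with $g=a$, $h=b$ and primes $p=r=2$, $q=3$: we have $o(g)=pr=4$, $o(h)=pq=6$, $p\ne q$, condition~(a) holds because $g^r=a^2=b^3=h^q$, and condition~(b) is vacuous since $q=3\ne 2=r$. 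Therefore $(a,\;a^2=b^3,\;b,\;b^2)$ is an induced $P_4$ in $P(M_{11})$, and $M_{11}$ is not a power-cograph group.

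I do not expect a genuine obstacle here: the whole argument collapses to the single structural fact that all involutions of $M_{11}$ are conjugate (so the conjugacy hypothesis of the $4$-$6$ test is automatic), combined with the existence of elements of orders $4$ and $6$ — all of which are immediate from the $\mathbb{ATLAS}$. If one wished to avoid the $\mathbb{ATLAS}$ entirely, one could instead fix explicit generating permutations of $M_{11}$ on $11$ points and exhibit suitable $a$ and $b$ by direct computation, but this would be less transparent and would add nothing essential to the proof.
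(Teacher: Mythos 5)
Your argument is correct and is essentially the paper's own: both apply the $4$-$6$ test using $\mathbb{ATLAS}$ data for $M_{11}$, your only addition being the explicit remark that $M_{11}$ has a single class of involutions, which justifies the conjugacy of $a^2$ and $b^3$ (equivalently, one can read this off the $\mathbb{ATLAS}$ power maps for the classes $4A$ and $6A$). The application of Theorem~\ref{minimal:cograph} with $p=r=2$, $q=3$ and the resulting induced path $(a,a^2=b^3,b,b^2)$ match the paper's converse argument exactly.
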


   \begin{theorem}
   Let $G$ be a sporadic simple group. Then $P(G)$ is not a cograph.
   \end{theorem}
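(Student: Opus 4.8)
The plan is to exploit the fact that $\mathcal{C}$ is subgroup-closed, so it suffices to exhibit, inside each sporadic group $G$, a subgroup witnessing failure of the cograph property. The cleanest unified tool is the $4$-$6$ test: if $G$ contains an element $a$ of order $4$ and an element $b$ of order $6$ with $a^2$ conjugate to $b^3$, then $G\notin\mathcal{C}$. For the vast majority of the $26$ groups one can simply read off from the $\mathbb{ATLAS}$ \cite{Conway} the existence of classes $4X$ and $6Y$ whose squares/cubes land in the (unique, since it is central in a cyclic group of that order) involution class, or more precisely in the same involution class; the power maps printed in the $\mathbb{ATLAS}$ make this immediate. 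Observation \ref{Mathiew:cograph} already does this for $M_{11}$, and the same verification handles $M_{12}$, $M_{22}$, $M_{23}$, $M_{24}$, the Janko, Conway, Fischer groups, $HS$, $M^cL$, $He$, $Ru$, $Suz$, $O'N$, $HN$, $Ly$, $Th$, $B$ and $M$: in each case there is an element of order $6$ whose cube is $2$-central (or lies in whichever involution class is the square of some order-$4$ element).

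The second ingredient, for the few groups that have no element of order $6$ or where the power maps happen not to line up, is to fall back on the subgroup-closure property directly: locate inside $G$ a maximal subgroup already known (from the earlier sections) not to lie in $\mathcal{C}$ --- for instance an alternating group $A_n$ with $n\ge7$, a symmetric group $S_n$ with $n\ge6$, a $\PSU(3,q)$, a $\PSp(4,q)$, or a group $\PSL(3,q)$ with $q\notin\{2,4\}$ --- all of which have been shown not to be power-cograph groups. The $\mathbb{ATLAS}$ maximal-subgroup lists, together with Wilson's book \cite{wilson}, supply such a subgroup for essentially every sporadic group; $J_1$ (which has elements of orders $2,3,5,6,7,10,11,15,19$) for example contains $A_5\times C_2$ or one can use the element of order $6$ and order $10$ and the $4$-$6$ test fails but a ``$4$-$10$''-style argument via Theorem \ref{minimal:cograph} with primes $p=2$, $q=3$ or $5$, $r=3$ applies --- in fact one simply checks $J_1$ has an element of order $6$ whose cube is its unique involution class and an element of order $4$... but $J_1$ has no element of order $4$, so here one uses instead that $J_1$ has elements $g$ of order $6$ and $h$ of order $10$ with $g^3=h^5$, invoking Theorem \ref{minimal:cograph} with $p=2$, $q=5$, $r=3$.

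Concretely the steps are: (1) state that by subgroup-closure it is enough to treat each $G$ separately and find an obstruction; (2) dispatch all groups possessing a suitable order-$4$/order-$6$ pair by the $4$-$6$ test, citing the $\mathbb{ATLAS}$ power maps, and list them; (3) for the residual groups apply Theorem \ref{minimal:cograph} with an appropriate triple of primes (the ``$4$-$6$ test'' being the special case $p=2$, $q=3$, $r=3$) or exhibit a forbidden subgroup from the earlier classification; (4) conclude. The main obstacle is purely bookkeeping: one must verify, group by group and class by class, that the relevant power maps in the $\mathbb{ATLAS}$ identify the cube of an order-$6$ element (or the fifth power of an order-$10$ element, etc.) with the square of an order-$4$ element in the same conjugacy class, and confirm that at least one such pair exists in every sporadic group; a handful of small cases (notably $J_1$ and $M_{23}$, which have relatively sparse element-order spectra) require a little care in choosing the prime triple, but no genuine difficulty arises since every sporadic group has order divisible by at least three primes and contains an element whose order is a product of two primes sharing the structure demanded by Theorem \ref{minimal:cograph}.
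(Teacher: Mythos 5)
Your proposal is logically sound but takes a genuinely different route from the paper. The paper economises: it verifies the $4$-$6$ test once, for $M_{11}$, and then invokes the fact that $M_{11}$ embeds in all sporadic groups except seven ($J_1$, $M_{22}$, $J_2$, $J_3$, $He$, $Ru$, $Th$); for those seven it exhibits explicit subgroups already known not to be power-cograph groups ($D_3\times D_5$, $A_7$, $A_4\times A_5$, $C_3\times A_6$, $S_7$, $A_8$, $\PSL(2,19){:}C_2$), using Theorems \ref{product:cographs}, \ref{symm:cograph} and \ref{alter:cograph}. You instead run the $4$-$6$ test locally in (almost) every sporadic group via the $\mathbb{ATLAS}$ power maps, falling back on Theorem \ref{minimal:cograph} with a different prime triple where needed. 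Your treatment of the one genuinely exceptional case, $J_1$ (which has no element of order $4$), is correct after the self-correction: the involution centraliser $C_2\times A_5$ supplies $g$ of order $6$ and $h$ of order $10$ with $g^3=h^5$, and Theorem \ref{minimal:cograph} with $p=2$, $q=5$, $r=3$ applies (or, equivalently, $C_2\times A_5$ itself violates Theorem \ref{product:cographs}). What your approach buys is uniformity -- a single criterion applied group by group, with many cases automatic (a unique involution class, or an element of order $12$, makes the power-map matching trivial); what it costs is that the ``bookkeeping'' you defer is roughly twenty-five separate character-table verifications, which you assert but do not carry out, whereas the paper reduces the $\mathbb{ATLAS}$ work to one power-map check plus a short list of subgroup containments. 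Two small inaccuracies to fix if you write this up: your initial list of groups dispatched by the $4$-$6$ test sweeps in the Janko groups before you notice $J_1$ lacks order-$4$ elements, and $M_{23}$ needs no special care at all, since it has a unique involution class together with elements of orders $4$ and $6$.
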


 \begin{proof}
 Observation \ref{Mathiew:cograph} shows that the power graph of the Mathieu group $M_{11}$ is not a cograph. Now the Mathieu group $M_{11}$ is a subgroup of
all the other sporadic simple groups except $J_1$, $M_{22}$, $J_2$, $J_3$, $He$, $Ru$ and $Th$. So the power graphs of these groups are also not cographs.

For the other seven groups we look for subgroups which are not 
power-cograph groups. We observe that $J_1$ contains $D_3\times D_5$, $M_{22}$ contains $A_7$, $J_2$ contains $A_4\times A_5$, $J_3$ contains $C_3\times A_6$, $He$ contains $S_7$, $Ru$ contains $A_8$ and $Th$ contains $\PSL(2,19):C_2$.
By Theorems \ref{product:cographs}, \ref{symm:cograph} and \ref{alter:cograph},
the power graphs of these subgroups are not cographs. Hence the power graphs of the original groups are not cographs. 
 \end{proof}

   \subsection*{Acknowledgements}
The authors are thankful to discussion sessions in ``Research Discussion on Graphs and Groups" organized by Cochin University of Science and Technology, India.
The author Pallabi Manna is supported by CSIR (Grant No-09/983(0037)/2019-EMR-I). Ranjit Mehatari thanks the SERB, India, for financial support (File Number: CRG/2020/000447) through the Core Research Grant.

\end{document}